\documentclass{amsart}
\usepackage{tikz}
\usepackage{xcolor}
\usepackage{amssymb,latexsym,amsmath,extarrows,mathabx}
\usepackage{esint}
\usepackage{graphicx,mathrsfs}
\usepackage{hyperref,url}
\usepackage[shortlabels]{enumitem}
\usepackage{makecell}

\usepackage{pgf,tikz,pgfplots}
\usepackage{mathrsfs}
\usetikzlibrary{positioning}
\usetikzlibrary{patterns}
\usepackage{graphicx}

\usepackage{geometry}
\newcommand{\opd}{{\operatorname{d}}}

\newcommand*{\me}{\ensuremath{\mathrm{e}}}
\newcommand*{\mi}{\ensuremath{\mathrm{i}}}
\newcommand*{\supp}{\ensuremath{\mathrm{supp}}}

\newtheorem{theorem}{Theorem}[section]
\newtheorem{lemma}[theorem]{Lemma}

\newtheorem{proposition}[theorem]{Proposition}
\newtheorem{remark}[theorem]{Remark}

\newtheorem{corollary}[theorem]{Corollary}

\newtheorem{conjecture}[theorem]{Conjecture}

\numberwithin{equation}{section}

\pgfplotsset{compat=1.18}

\title{The Szemer\'edi-Trotter Estimate in Finite Field with its Applications}
\author[C. Miao]{Changxing Miao} 
\address{School of Mathematics and Physics, University of Science and Technology Beijing, Beijing 100083, China} 
\email{miao\_changxing@ustb.edu.cn}
\author[R. Xie]{Rui Xie}
\address{The Graduate School of China Academy of Engineering Physics, Beijing 100088, China}
\email{xierui24@gscaep.ac.cn}
\begin{document}
    \begin{abstract}
        We prove a sharp Szemer\'edi-Trotter estimate
        \[\mathcal{I}(A,\mathcal{L})\lesssim \frac{|A||\mathcal{L}|}{p}+|A|^{2/3}|\mathcal{L}|^{2/3}+|A|+|\mathcal{L}|\]
        for prime finite field $\mathbb{F}=\mathbb{F}_p$ by a new polynomial decomposition theorem.
        As applications, we first prove the sharp Furstenberg set estimate in $\mathbb{F}^2$.
        Secondly, we improve sum-product estimate
        \[\max\{|A+A|,|A\cdot A|\}\gtrsim\min\{(p|A|)^{1/2},|A|^{5/4}\},\quad A\subset\mathbb{F}.\]
        Finally, we improve the Fourier restriction estimate $R^*(2\to\alpha)$ holds for $\alpha>\frac{10}{3}$ in $\mathbb{F}^3$ when $p\equiv 3\mod 4$.
    \end{abstract}
    \maketitle

    \section{Introduction}\label{sec:intro}
    \subsection{Overview}
    It was shown by Szemerédi and Trotter \cite{zbMATH03859136}, settling a conjecture by Erd\"os, that points $A\subset \mathbb{R}^2$ and lines $\mathcal{L}$ determine at most $\mathcal{O}(|A|^{2/3}|\mathcal{L}|^{2/3}+|A|+|\mathcal{L}|)$ point-line incidences, which defined by
    \[\mathcal{I}(A,\mathcal{L}):=\sum_{a\in A}|\{\ell\in\mathcal{L}:a\in\ell\}|=\sum_{\ell\in\mathcal{L}}|\{a\in A:a\in \ell\}|.\]
    In the past twenty years, there are a lot progress in incidence geometry coming form the polynomial method since \cite{zbMATH05817590}, such as Kaplan-Matou{\v{s}}ek-Sharir \cite{zbMATH06093492} have used Guth-Katz's polynomial method to prove Szemerédi-Trotter theorem and
    Zahl \cite{zbMATH06494224} have used polynomial method to prove the Szemerédi-Trotter theorem in $\mathbb{C}^2$.
    Recently, Shen \cite{shen2025szemereditrottertheoremarbitraryfield} have prove the Szemerédi-Trotter theorem for all characteristic zero field, e.g. $p$-adic plane $\mathbb{Q}_p^2$.

    For $\mathbb{F}^2$, where $\mathbb{F}=\mathbb{F}_p$, $p>2$ is prime, the same $\mathcal{O}(|A|^{2/3}|\mathcal{L}|^{2/3}+|A|+|\mathcal{L}|)$ bound fails if we choose $A=\mathbb{F}^2$ and $\mathcal{L}$ is the collection of all non-vertical lines.
    In fact, $|A|=|\mathcal{L}|=p^2$ and $\mathcal{I}(A,\mathcal{L})=p^3$.
    It shows a big difference between lines in $\mathbb{R}^2$ and $\mathbb{F}^2$.
    Since the line in $\mathbb{F}^2$ only contains finite $p$ points, when $|A|$ is large and randomly distributed, one may expect each line contains $\frac{|A|}{p}$ points in $A$, and the incidence will be dominated by $\frac{|A||\mathcal{L}|}{p}$.
    
    The incidence estimate in $\mathbb{F}^2$ was first studied by Bourgain-Katz-Tao \cite{zbMATH02121750}.
    They proved that if we add a restriction $|A|,|\mathcal{L}|\leq p^\alpha$, $0<\alpha<2$, then 
    \[\mathcal{I}(A,\mathcal{L})\lesssim p^{\frac{3\alpha}{2}-\varepsilon}\]
    holds for some $\varepsilon=\varepsilon(\alpha)>0$.
    For the special case $|A|=|\mathcal{L}|<p$, Helfgott-Rudnev \cite{zbMATH05851269} proved a lower bound $\varepsilon\geq\frac{1}{10678}$.
    And then improved to $\varepsilon\geq\frac{1}{662}$ by Jones \cite{jones2012improvementsincidencebecktypebounds}.

    After Bourgain-Katz-Tao, Vinh \cite{zbMATH05982464} proved a incidence estimate without restriction:
    For any $A\subset \mathbb{F}^2$ and lines $\mathcal{L}$, we have
    \[\mathcal{I}(A,\mathcal{L})\lesssim\frac{|A||\mathcal{L}|}{p}+(p|A||\mathcal{L}|)^{1/2}.\]
    And then, Lund-Pham-Vinh \cite{lund2026orthogonalprojectionsincidencebounds} put forward a conjecture on this problem by adding the additional term $\frac{|A||\mathcal{L}|}{p}$ in the original Szemerédi-Trotter bound.
    In this paper, we prove their conjecture.
    \begin{theorem}\label{thm:szemereditrotter}
        For any $A\subset\mathbb{F}^2$ and lines $\mathcal{L}$,
        \begin{equation}\label{eq:szemereditrotter}
            \mathcal{I}(A,\mathcal{L})\lesssim \frac{|A||\mathcal{L}|}{p}+|A|^{2/3}|\mathcal{L}|^{2/3}+|A|+|\mathcal{L}|.
        \end{equation}
    \end{theorem}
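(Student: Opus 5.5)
We will follow the polynomial partitioning strategy of Kaplan--Matou\v{s}ek--Sharir, replacing the real ham-sandwich/partitioning step by the paper's polynomial decomposition theorem over $\mathbb{F}_p$. By point-line duality in $\mathbb{F}^2$ we may assume $|A|\ge|\mathcal{L}|$ when convenient. Throughout we use two trivial bounds. The Cauchy--Schwarz bound $\mathcal{I}(A,\mathcal{L})\lesssim |A||\mathcal{L}|^{1/2}+|\mathcal{L}|$ follows because two lines meet in at most one point. Vinh's bound $\mathcal{I}\lesssim \frac{|A||\mathcal{L}|}{p}+(p|A||\mathcal{L}|)^{1/2}$ is spectral and already proves \eqref{eq:szemereditrotter} whenever $(p|A||\mathcal{L}|)^{1/2}\lesssim |A|^{2/3}|\mathcal{L}|^{2/3}$, i.e.\ whenever $|A||\mathcal{L}|\gtrsim p^3$. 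So the remaining regime is $|A||\mathcal{L}|\ll p^3$. There the target is the Euclidean bound $|A|^{2/3}|\mathcal{L}|^{2/3}+|A|+|\mathcal{L}|$, with the term $|A||\mathcal{L}|/p$ serving as slack for pieces that are too dense.

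In that regime, choose a degree $D\approx |A|^{2/3}|\mathcal{L}|^{-1/3}$, with $D\le p$ after the previous reduction. Apply the polynomial decomposition theorem to produce a polynomial $f$ of degree $\le D$ and a partition of $A\setminus Z(f)$ into $\lesssim D^2$ cells, each containing $\lesssim |A|/D^2$ points. The cells must satisfy the substitute for the real ``connected components'' property: every line not contained in $Z(f)$ meets at most $\lesssim D$ cells. In $\mathbb{F}_p$ there is no sign change, so the decomposition theorem must supply this combinatorial crossing property directly. For instance, cells could be defined by level sets or by a family of polynomials, so that a line crosses a cell boundary only where a univariate polynomial of degree $D$ vanishes.

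With such a decomposition, the standard computation runs as follows. Inside each cell, apply the Cauchy--Schwarz bound in the dual form $\mathcal{I}_i\lesssim |\mathcal{L}_i||A_i|^{1/2}\cdot$ (or $|A_i|^2+|\mathcal{L}_i|$). Summing with $\sum_i|\mathcal{L}_i|\lesssim D|\mathcal{L}|$ gives $\lesssim |A|^{2/3}|\mathcal{L}|^{2/3}+|\mathcal{L}|$. On $Z(f)$, lines not contained in $Z(f)$ meet it in at most $D$ points, contributing $\le D|\mathcal{L}|$. The lines contained in $Z(f)$ number at most $D$. Their incidences with $A\cap Z(f)$ are bounded by $|A|+D^2$, since each point lies on one such line unless it is an intersection point, and $D^2\lesssim |A|^{2/3}|\mathcal{L}|^{2/3}$ when $|A|\le|\mathcal{L}|^2$. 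The case $|A|>|\mathcal{L}|^2$ is handled by the Cauchy--Schwarz bound directly. Whenever a cell would be too dense, one closes instead with Vinh's bound in that cell, and this produces the $|A||\mathcal{L}|/p$ term.

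The main obstacle is the decomposition step itself. One needs equidistribution of $A$ into $\sim D^2$ cells together with the property that a line meets only $O(D)$ cells, for $D$ up to about $p$. I would first try an iterated construction: repeatedly bisect point sets using polynomials chosen by a counting/dimension argument (a polynomial vanishing prescribed subsets). Each line is then controlled through the number of roots of the restriction of $f$ to it. If that fails near $D\sim p$, I would interpolate by applying Vinh's spectral bound at the scale where cells have $\gtrsim p$ points.
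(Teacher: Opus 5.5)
\textbf{There is a genuine gap.} Your whole argument rests on a step that neither the paper nor your sketch supplies: an equipartition of $A\subset\mathbb{F}_p^2$ into $\sim D^2$ cells such that every line not in $Z(f)$ meets only $O(D)$ of them.

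Theorem~\ref{thm:polynomialdec} is not such a result, because it distributes nothing into cells. It says that for $P$ of bidegree $(D_1,D_2)$, the parameters $(a,b)$ of lifted lines $\{(x,ax+b,a)\}\subset Z(P)$ lie on a curve $Z(Q)$ of degree $\le D_2$, apart from $\lesssim D_1D_2$ exceptions. No partitioning theorem with your crossing property is available over $\mathbb{F}_p$, and the constructions you suggest do not produce one:
\begin{itemize}
\item In $\mathbb{R}^2$ the equipartition comes from the polynomial ham sandwich theorem (Borsuk--Ulam). The crossing bound comes from connectivity: $Z(f)$ cuts $\ell$ into at most $D+1$ intervals, each lying in a single component.
\item Over $\mathbb{F}_p$ a line has neither order nor connectedness. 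So the fact that $f|_\ell$ has at most $D$ roots bounds $|\ell\cap Z(f)|$ and nothing else.
\item With level-set cells $\{f=c\}$, a line can meet up to $p$ cells. With cells defined by the quadratic characters of several polynomials, a line typically realizes every sign pattern (by Weil's bound), so its points can land in all $\sim D^2$ cells.
\item A polynomial chosen by dimension counting to vanish at prescribed points does not bisect anything, so the iterated-bisection plan has no engine.
\item The Vinh fallback in dense cells also fails. Summing the error term $(p|A_i||\mathcal{L}_i|)^{1/2}$ over the cells, with $\sum_i|\mathcal{L}_i|\sim D|\mathcal{L}|$, gives about $D^{1/2}(p|A||\mathcal{L}|)^{1/2}$ for evenly filled cells. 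That is worse than applying Vinh once globally.
\end{itemize}
Your reduction to $|A||\mathcal{L}|\ll p^3$ via Vinh is correct, but it leaves the real difficulty untouched.

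\textbf{What the paper does instead.} It avoids partitioning entirely.
\begin{itemize}
\item By dyadic summation, choosing $r=\max\{2,16|A|/p,(|A|^2/|\mathcal{L}|)^{1/3}\}$, it reduces \eqref{eq:szemereditrotter} to the rich-line bound $|\mathcal{L}_r(A)|\lesssim |A|^2/r^3+|A|/r$ for $r\ge\max\{2,16|A|/p\}$ (Theorem~\ref{thm:rrichF}).
\item For $r\le 2|A|^{1/2}$, parameter counting gives $P\in\mathbb{F}[x,y,z]$ with $\deg_zP\le D_2=\lceil 4|A|/r^2\rceil$ and $\deg_{x,y}P\le D_1=D_2r-1$. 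It is chosen so that $P(a_1+t,a_2+zt,z)$ vanishes to order $D_2$ in $t$ at every $(a_1,a_2)\in A$.
\item For an $r$-rich line $y=ax+b$, the polynomial $P(x,ax+b,a)$ then has $r$ roots of multiplicity $D_2$, total more than its degree. So it vanishes identically and $\tilde\ell_{a,b}\subset Z(P)$.
\item Theorem~\ref{thm:polynomialdec} splits off the factors killed by $\mathcal{D}=\partial_x+z\partial_y$ and applies a Sylvester resultant in $z$. This leaves $\lesssim D_1D_2\sim|A|^2/r^3$ such lines off $Z(Q)$.
\item Lemma~\ref{lem:algebraic}, applied in the dual plane, bounds the lines on $Z(Q)$ by $\lesssim D_2|A|/r$.
\item C\'ordoba's $L^2$ argument handles $r>2|A|^{1/2}$.
\end{itemize}
The term $|A||\mathcal{L}|/p$ enters because $D_1\approx 4|A|/r$ must stay below $p$ for Theorem~\ref{thm:polynomialdec} to apply, not through Vinh's bound.
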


    To prove Theorem \ref{thm:szemereditrotter}, we prove an equivalent theorem.
    For $A\subset \mathbb{F}^2$ and $2\leq r\leq |A|$, define 
    \[\mathcal{L}_r(A):=\{\ell\subset \mathbb{F}^2:\ell\text{ is a line such that }|A\cap\ell|\geq r\}.\]
    \begin{theorem}\label{thm:rrichF}
        For $A\subset\mathbb{F}^2$.
        If $r\geq \max\left\{2,\frac{16|A|}{p}\right\}$, then we have 
        \begin{equation}\label{eq:rrichF}
            |\mathcal{L}_r(A)|\lesssim \frac{|A|^2}{r^3}+\frac{|A|}{r}.
        \end{equation}
    \end{theorem}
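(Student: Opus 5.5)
The plan is a polynomial partitioning argument in the style of Kaplan--Matou\v{s}ek--Sharir, with the polynomial decomposition theorem announced in the abstract standing in for the ham-sandwich partitioning, which does not exist over $\mathbb{F}=\mathbb{F}_p$. First I dispose of large $r$. If $r\geq C\sqrt{|A|}$, two distinct lines of $\mathcal{L}_r(A)$ share at most one point. Inclusion--exclusion applied to any $m\le r/2$ of these lines then shows that their union has at least $mr/2$ points of $A$. Hence $|\mathcal{L}_r(A)|\lesssim |A|/r$ in this range. So from now on I assume $2\le r\le C\sqrt{|A|}$, and the target is $|\mathcal{L}_r(A)|\lesssim |A|^2/r^3$.

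The input I would invoke (and, if necessary, prove first) is a decomposition statement of the following shape. For every $D$ with $1\le D\le p/C$ there is a nonzero $P\in\mathbb{F}[x,y]$ of degree $\lesssim D$ such that $A\setminus Z(P)$ splits into $\lesssim D^2$ classes $A_1,\dots,A_M$, each with $|A_i|\lesssim |A|/D^2$. The classes must also satisfy the separation property replacing connectedness of cells: any line $\ell\not\subset Z(P)$ has its points of $A\setminus Z(P)$ in at most $\lesssim D$ classes. The only algebraic fact used about $Z(P)$ is elementary: if $\deg P<p$ and $\ell\not\subset Z(P)$, then $P|_\ell$ is a nonzero univariate polynomial, so $|\ell\cap Z(P)|\le \deg P$. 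This is exactly where the hypothesis $r\geq 16|A|/p$ enters. I take $D=r/C_0$, and the hypothesis makes this choice admissible, i.e.\ $D<p$ with room to spare; it also keeps the cell size $|A|/D^2$ in the regime where the decomposition can be performed.

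Now I run the counting with $D=r/C_0$, $C_0$ large. Lines contained in $Z(P)$ number at most $\deg P\lesssim r$. Since $r\lesssim\sqrt{|A|}$, this is $\lesssim |A|/r\lesssim |A|^2/r^3$. Every other line $\ell\in\mathcal{L}_r(A)$ meets $Z(P)$ in at most $\deg P\le r/4$ points, so it has $\ge 3r/4$ points in $A\setminus Z(P)$, and these are distributed among at most $\lesssim D\le r/4$ classes. Discarding classes where $\ell$ meets only one point loses at most $r/4$ incidences. Thus $\ell$ still has $\ge r/2$ incidences in classes $A_i$ where $|\ell\cap A_i|\ge 2$. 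Write $L=|\mathcal{L}_r(A)|$ and let $\mathcal{L}_i$ be the lines meeting $A_i$ in $\ge2$ points. The trivial pair-counting bound gives $\mathcal{I}(A_i,\mathcal{L}_i)\le |A_i|^2+|\mathcal{L}_i|$. Summing over classes yields
\[ \frac{rL}{2}\le \sum_i\big(|A_i|^2+|\mathcal{L}_i|\big)\lesssim D^2\cdot\frac{|A|^2}{D^4}+D L=\frac{|A|^2}{D^2}+DL. \]
With $D=r/C_0$ and $C_0$ large, the term $DL$ is absorbed into the left side, and I get $L\lesssim |A|^2/(r^2D)\sim |A|^2/r^3$.

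The main obstacle is clearly the decomposition theorem itself. Over $\mathbb{R}$ the separation property comes from the components of $\mathbb{R}^2\setminus Z(P)$ and the fact that a line crosses at most $D+1$ of them. Over $\mathbb{F}_p$ there is no topology, so the ``cells'' must be defined algebraically, for instance as level classes of auxiliary polynomials $P_1,\dots,P_k$ built iteratively by a discrete ham-sandwich/pigeonhole step. The separation property would then have to follow from degree bounds on the restrictions $P_j|_\ell$, each of which changes ``class'' at most $\deg P_j$ times along $\ell$. I would first try to build the classes by choosing, at each bisection step, a polynomial of degree $\sim 2^{j/2}$ whose values split the current classes in half. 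Bisecting by value requires an ordering on $\mathbb{F}_p$, and the key difficulty is to control how many value-intervals a degree-$d$ univariate polynomial on $\mathbb{F}_p$ can alternate between. I expect this to work only while $d\ll p/(\text{cell size})$, which is consistent with the threshold $r\gtrsim |A|/p$.
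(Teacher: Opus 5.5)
\textbf{Gap: the partitioning theorem you invoke is unproved and, as you state it, false.} It carries the whole content of the theorem. You claim it for every $1\le D\le p/C$, with nothing tied to $|A|/p$. Your conditional counting (a Kaplan--Matou\v{s}ek--Sharir style argument) is correct, so apply it to $A=\mathbb{F}^2$ and $r=\lceil p/2\rceil$. Then $r\le C\sqrt{|A|}$, and $D=r/C_0\le p/C$ is admissible, so you would conclude $|\mathcal{L}_r(\mathbb{F}^2)|\lesssim |A|^2/r^3\sim p$. But all $p^2+p$ lines are $p$-rich.

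You can also see it directly. The $\gtrsim p^2$ lines not contained in $Z(P)$ would each carry $\gtrsim p^2/D$ same-class pairs. Each pair of distinct points lies on only one line, so $p^4/D\lesssim p^4/D^2+p^3$, which is impossible for $1\ll D\ll p$.

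\textbf{The hypothesis $r\ge 16|A|/p$ plays no real role in your argument.} It is a lower bound on $r$, so it cannot be what gives $D<p$; that comes from $r\le p$. The estimate fails without the hypothesis (e.g.\ $A=\mathbb{F}^2$, $r=p$), so an argument that never uses it must break somewhere, and here it breaks at the partitioning.

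\textbf{A restricted version does not rescue the argument.} A version limited to $D\gtrsim|A|/p$ would itself be a finite-field polynomial partitioning theorem of essentially the same strength as what you are proving. Your value-bisection sketch gives no mechanism for the separation property. Over $\mathbb{F}_p$, the ``half'' of $\mathbb{F}_p$ containing $P_j(t)$ is unrelated to the zeros of $P_j|_\ell$; think of which half contains $t^2$. So the degree does not bound how many classes a line meets, and generically a line meets almost all of them. There is no substitute for ``signs change only at zeros.''

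\textbf{What the paper does instead.} Its polynomial decomposition is not a partition of $A$ at all. For $r\le 2|A|^{1/2}$, it lifts $\ell_{a,b}$ to $\tilde\ell_{a,b}=\{(x,ax+b,a)\}\subset\mathbb{F}^3$. A parameter count then gives $P\in\mathbb{F}[x,y,z]$ with
\begin{itemize}
\item $\deg_{x,y}P\le D_1=D_2r-1$ and $\deg_zP\le D_2=\lceil 4|A|/r^2\rceil$;
\item vanishing to order $D_2$ at every point of $A$ in every direction $(1,z)$.
\end{itemize}
For an $r$-rich line, $P(x,ax+b,a)$ is divisible by a polynomial of degree $D_2r>D_1$, so $(a,b)\in\mathcal{S}(P)$. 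The hypothesis $r\ge 16|A|/p$, together with $r\le 2|A|^{1/2}$, is exactly what gives $D_1<p$, the characteristic condition of Theorem \ref{thm:polynomialdec}.

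That theorem splits off the irreducible factors with $\mathcal{D}F_j=0$, where $\mathcal{D}=\partial_x+z\partial_y$. These become a curve $Z(Q)$ of degree $\le D_2$ in the dual $(a,b)$-plane. The remaining factors contribute $\lesssim D_1D_2\sim|A|^2/r^3$ lines, via a Sylvester resultant in $z$. The $r$-rich dual points on $Z(Q)$ number $\lesssim D_2|A|/r\sim|A|^2/r^3$ by Lemma \ref{lem:algebraic}. Only the range $r>2|A|^{1/2}$ uses a C\'ordoba-type $L^2$ argument, and there it matches your first step.
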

    It's clear that if we choose $\mathcal{L}=\mathcal{L}_r(A)$, then Theorem \ref{thm:szemereditrotter} implies Theorem \ref{thm:rrichF}.
    It suffices to prove that Theorem \ref{thm:rrichF} implies Theorem \ref{thm:szemereditrotter}. 
    See Section \ref{sec:incidence} for more detail.

    Our proof of Theorem \ref{thm:rrichF} is based on the polynomial method developed by Guth-Katz \cite{zbMATH06383662}, where they reduce the estimate of number of rotation in $\mathbb{R}^2$ to an estimate of rich points in $\mathbb{R}^3$.
    Inspired by their reduction, we can lift the non-vertical lines $\ell_{a,b}:=\{(x,ax+b),x\in\mathbb{F}\}$ in $\mathbb{F}^2$ to $\tilde{\ell}_{a,b}:=\{(x,ax+b,a):x\in\mathbb{F}\}$ in $\mathbb{F}^3$.
    And we can find a polynomial $P\in\mathbb{F}[x,y,z]$ with bounded degree with respect to $r$ and $|A|$ such that $\tilde{\ell}_{a,b}\subset Z(P)$ for all non-vertical $\ell_{a,b}\in\mathcal{L}_r(A)$.
    By constructing the key polynomial decomposition adapted to the lifted line, we can find a polynomial $Q\in\bar{\mathbb{F}}[a,b]$ such that the dual point $(a,b)\in\mathbb{F}^2$ lies on $Z(Q)$ for all non-vertical $\ell_{a,b}\in\mathcal{L}_r(A)$, except a few points for which there is a good upper bound on their number by the degrees of $P$.
    Combining it with an estimate determined by B\'ezout theorem for $r$-rich points on $Z(Q)$, we can obtain Theorem \ref{thm:rrichF} for small $r$.
    For the high rich case, we use an alternative C\'ordoba $L^2$ argument.

    The most important difference between polynomial decomposition in this paper and Guth-Katz's polynomial decomposition is that after roling out the degenerate part of $Z(P)$, which exactly is $Z(Q)$, we can use multihomogeneous B\'ezout theorem \cite[Example 4.9]{zbMATH06176082} to count the intersection of varieties in multiprojective space.
    See the discussion in Section \ref{sec:poly} for more detail.

    \subsection{Applications}
    We will give three important applications of Theorem \ref{thm:szemereditrotter} and Theorem \ref{thm:rrichF}.
    We first prove the sharp Furstenberg set estimate in $\mathbb{F}^2$.
    Secondly, we improve sum-product estimate
    \[\max\{|A+A|,|A\cdot A|\}\gtrsim\min\{(p|A|)^{1/2},|A|^{5/4}\},\quad A\subset\mathbb{F}.\]
    Finally, we improve the Fourier restriction estimate $R^*(2\to\alpha)$ holds for $\alpha>\frac{10}{3}$ in $\mathbb{F}^3$ when $p\equiv 3\mod 4$.
    \subsubsection{Furstenberg set estimate}
    The Furstenberg set problem was originally considered by Wolff \cite{zbMATH01303711} in $\mathbb{R}^2$.
    Fix $s\in(0,1].\; t\in(0,2]$.
    We say $E\subset\mathbb{R}^2$ is a $(s,t)$-set in $\mathbb{R}^2$. if $E$ is of the form 
    \[E=\bigcup_{\ell\in\mathcal{L}}Y(\ell),\]
    where $\mathcal{L}$ is a set of lines in $\mathbb{R}^2$ with $\dim_H\mathcal{L}\geq t$ and for each $\ell\in\mathcal{L}$ there exists $Y(\ell)\subset \ell$ with $\dim_H Y(\ell)\geq s$.
    The Furstenberg set problem asks about the optimal lower bound on the Hausdorff dimension of $(s,t)$-set.
    Ren-Wang \cite{ren2025furstenbergsetsestimateplane} resolved the Furstenberg set problem by proving 
    \[\min_{E:E\text{ is }(s,t)\text{-set}}{\dim_H} E=\min\left\{s+t,\frac{3s}{2}+\frac{t}{2},s+1\right\}.\]

    For lines $\mathcal{L}$ in $\mathbb{F}^2$.
    We say $E\subset \mathbb{F}^2$ is $r$-dense Furstenberg set with respect to $\mathcal{L}$ iff. 
    $|Y(\ell)|:=|E\cap\ell|\geq r$ holds for all $\ell\in\mathcal{L}$.
    Gan \cite{zbMATH08226297} proposed corresponding Furstenberg set conjecture in $\mathbb{F}^2$, which is a direct corollary of Theorem \ref{thm:rrichF}.
    \begin{theorem}\label{thm:furstenberg}
        If $E\subset \mathbb{F}^2$ is $r$-dense Furstenberg set with respect to $\mathcal{L}$, then 
        \[|E|\gtrsim r^{3/2}|\mathcal{L}|^{1/2}+r|\mathcal{L}|+pr.\]
    \end{theorem}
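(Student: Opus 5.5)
The plan is to read Theorem \ref{thm:furstenberg} as a restatement of Theorem \ref{thm:rrichF}, applied to the point set $A=E$. Throughout, $\mathcal{L}$ is a family of lines with $|E\cap\ell|\geq r$ for every $\ell\in\mathcal{L}$. In other words, $\mathcal{L}\subset\mathcal{L}_r(E)$. We may assume $r\geq 2$, since for $r=1$ the statement reduces to the trivial bounds $|E|\geq 1$ and $|E|\gtrsim$ (number of points needed to meet $|\mathcal{L}|$ lines). I will get a lower bound for $|E|$ by comparing it with each of the three quantities $pr$, $r|\mathcal{L}|$ and $r^{3/2}|\mathcal{L}|^{1/2}$ in turn.

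\textbf{Step 1: the threshold.} Consider the hypothesis $r\geq 16|E|/p$ of Theorem \ref{thm:rrichF}. If it fails, then $|E|>pr/16$, and the $pr$ term is obtained directly.

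\textbf{Step 2: apply Theorem \ref{thm:rrichF}.} Suppose instead that $|E|\leq pr/16$. Then Theorem \ref{thm:rrichF} applies with $A=E$ and gives
\[
|\mathcal{L}|\leq|\mathcal{L}_r(E)|\lesssim \frac{|E|^2}{r^3}+\frac{|E|}{r}.
\]
At least one of the two terms on the right dominates. If the first does, then $|E|\gtrsim r^{3/2}|\mathcal{L}|^{1/2}$. If the second does, then $|E|\gtrsim r|\mathcal{L}|$.

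\textbf{Step 3: assemble the three terms.} This is the step I expect to be the real obstacle. Steps 1--2 bound $|E|$ below by whichever of the three quantities is selected by the case analysis. To bound $|E|$ below by their sum (equivalently, up to a factor $3$, by their maximum), I would proceed as follows.

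First, use the dichotomy in Step 2 in reverse. In the regime $|E|\geq r^2$, the quadratic term $|E|^2/r^3$ dominates $|E|/r$. So there the $r^{3/2}|\mathcal{L}|^{1/2}$ bound holds. Moreover, $r^{3/2}|\mathcal{L}|^{1/2}\geq r|\mathcal{L}|$ exactly when $|\mathcal{L}|\leq r$. So in that regime the sum is controlled by the larger term. In the regime $|E|<r^2$ the linear term dominates, and one tries to show that the other terms are smaller there.

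Second, show $|E|\gtrsim pr$ even when $|E|\leq pr/16$. I would do this by contradiction: in that case $\mathcal{L}\subset\mathcal{L}_r(E)$ has size $\lesssim |E|/r+|E|^2/r^3$, and one would then need to show that this is incompatible with the configuration.

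I flag honestly that this last step cannot succeed in full generality. A single line with $r$ points gives $|E|=r\ll pr$. Taking $E=\mathbb{F}^2$, $r=p$ and $\mathcal{L}$ all $p^2+p$ lines gives $|E|=p^2\ll r|\mathcal{L}|$. So the sum can only be established under additional size relations between $r$, $|\mathcal{L}|$ and $p$ that make the other terms comparable. For example, it holds when $|\mathcal{L}|\gtrsim p$ together with $r|\mathcal{L}|\lesssim p r$; in that situation each term is dominated by the one produced in Steps 1--2.

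Thus the unconditional content I can prove is the case analysis in Steps 1--2. Extending it to the full sum would require identifying those extra hypotheses, and that is where I expect the real work, or a correction of the statement, to lie.
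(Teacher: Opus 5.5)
\textbf{Steps 1--2 are the paper's whole proof, and your diagnosis of the statement is right.} The paper splits on $r<\max\{2,16|E|/p\}$. There it writes $|E|\lesssim pr$, where $|E|\gtrsim pr$ is meant. Otherwise it applies Theorem~\ref{thm:rrichF} with $A=E$ and concludes ``which means $|E|\gtrsim r^{3/2}|\mathcal{L}|^{1/2}+r|\mathcal{L}|$''. That last inference is exactly the min-versus-sum conflation you flagged: $|\mathcal{L}|\lesssim |E|^2/r^3+|E|/r$ only yields $|E|\gtrsim\min\{r^{3/2}|\mathcal{L}|^{1/2},r|\mathcal{L}|\}$.

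Your two counterexamples (one $r$-rich line; $E=\mathbb{F}^2$ with all lines) are valid. So the statement with a sum is false. What is true, and what your Steps 1--2 and the paper's argument both establish, is
\[
|E|\gtrsim\min\{r^{3/2}|\mathcal{L}|^{1/2},\,r|\mathcal{L}|,\,pr\},\qquad r\geq 2 .
\]
With $r=p^s$ and $|\mathcal{L}|=p^t$ this is the finite-field analogue of Ren--Wang's $\min\{s+t,\frac{3s}{2}+\frac{t}{2},s+1\}$. You should state this as your conclusion and drop Step~3 entirely; there are no extra hypotheses to find.

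\textbf{Two of your side claims are wrong.}

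First, $r=1$ is not a trivial case that can be set aside. Take $E$ a single point and $\mathcal{L}$ the $p+1$ lines through it. Then $|E|=1$, while even the min-form bound is $\min\{\sqrt{p+1},p+1,p\}=\sqrt{p+1}$. So $r\geq 2$ is a genuine hypothesis, as in Theorem~\ref{thm:rrichF}. The paper's ``it's clear'' for $r<2$ is equally unjustified.

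Second, your parenthetical claim that the sum version holds when $|\mathcal{L}|\gtrsim p$ and $r|\mathcal{L}|\lesssim pr$ is false. Take $k=\lfloor p^{1/3}\rfloor$ and $E=\{1,\dots,k\}\times\{1,\dots,2k^2\}$, which embeds injectively in $\mathbb{F}^2$ since $2k^2<p$. Take the $k^3$ lines $y=ax+b$ with $1\leq a\leq k$ and $1\leq b\leq k^2$. Each line contains at least $k$ points of $E$, so $r=k$ and $|\mathcal{L}|\sim p$. Then $|E|\sim p$, whereas $pr\sim p^{4/3}$. In this regime your case analysis returns the term $r^{3/2}|\mathcal{L}|^{1/2}\sim p$, which does not dominate $pr$.
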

    Gan also proved that Theorem \ref{thm:furstenberg} implies the exceptional set estimate in $\mathbb{F}^n$ for $n\geq 2$.

    For integer $1\leq k<n$, let $G(k,\mathbb{F}^n)$ denote the Grassmannian of $k$-dimensional linear subspace of $\mathbb{F}^n$.
    Given $A\subset \mathbb{F}^n$, the orthogonal projection of $A$ along $V\in G(n-k,\mathbb{F}^n)$ is defined to be
    \[\pi_V(A):=\{x+V:(x+V)\cap A\neq \varnothing, \; x\in\mathbb{F}^n\}.\]
    For $s>0$, define the exceptional set 
    \[E_s(A;n,k):=\{V\in G(n-k,\mathbb{F}^n):|\pi_V(A)|\leq p^s\}.\]
    We have the following proposition. 

    \begin{proposition} 
        Fix $1\leq k<n$, $a\in(0,n]$ and $s>0$.
        For $A\subset\mathbb{F}^n$ such that $|A|\geq p^a$, we have 
        \[|E_{s-\varepsilon}(A;n,k)|\lesssim p^{\varepsilon+\mathbf{M}(a,s;n,k)}.\]
        Here $\mathbf{M}(a,s;n,k)$ is explicit.
        Write $a=m+\beta$ and $s=l+\gamma$, with $m,l$ nonnegative integers and $0<\beta,\gamma\leq 1$.
        \[\mathbf{M}(a,s;n,k)=\begin{cases}
            -\infty,\quad s\leq a-(n-k),\\
            k(n-k),\quad s>\min\{a,k\},\\
            k(n-k)-(m-l)(k-l)+\max\{2\gamma-\beta-1,0\},\;\; a-(n-k)<s\leq\min\{a,k\},\,\gamma>\beta,\\
            k(n-k)-(m+1-l)(k-l)+\max\{2\gamma-\beta,0\},\;\; a-(n-k)<s\leq\min\{a,k\},\,\gamma\leq\beta.
        \end{cases}\]
    \end{proposition}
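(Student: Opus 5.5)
The plan is not to prove the proposition from scratch. Gan \cite{zbMATH08226297} showed that the finite-field Furstenberg estimate of Theorem \ref{thm:furstenberg} implies an exceptional set bound of exactly this form. So I will verify that Theorem \ref{thm:furstenberg} is the only input Gan's argument needs, and then run that argument. Theorem \ref{thm:furstenberg} itself follows from Theorem \ref{thm:rrichF}: take $A=E$, so that every $\ell\in\mathcal{L}$ is $r$-rich. If $r\geq 16|E|/p$, then \eqref{eq:rrichF} gives $|\mathcal{L}|\lesssim |E|^2/r^3+|E|/r$, which rearranges to $|E|\gtrsim r^{3/2}|\mathcal{L}|^{1/2}+r|\mathcal{L}|$ up to the obvious case split. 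If instead $r<16|E|/p$, then $|E|\gtrsim pr$ directly. The trivial bounds take care of the remaining terms.

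The core case is $n=2$, $k=1$. Each $V\in E_{s-\varepsilon}(A;2,1)$ is a direction, and $\pi_V(A)$ is a set of at most $p^{s-\varepsilon}$ parallel lines covering $A$. By pigeonholing, for each exceptional direction I keep the lines containing at least $|A|p^{-s}$ points of $A$. These lines still cover at least half of $A$, so for each of the $N=|E_{s-\varepsilon}|$ directions we get about $p^{s}$ lines, each $r$-rich with $r\approx p^{a-s}$. This gives a line family $\mathcal{L}$ with $|\mathcal{L}|\approx Np^{s}$, and $A$ is an $r$-dense Furstenberg set for $\mathcal{L}$. Theorem \ref{thm:furstenberg} then yields $p^a\gtrsim r^{3/2}(Np^s)^{1/2}+rNp^s+pr$. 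Solving for $N$ produces the exponent $\max\{2s-a-1,0\}$ type terms, which is the $\max\{2\gamma-\beta-1,0\}$ and $\max\{2\gamma-\beta,0\}$ structure when $m,l\in\{0,1\}$. The regime $s\leq a-1$ gives $N=0$, since any projection has at least $|A|/p$ fibres.

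For general $(n,k)$ I follow Gan's reduction. I fibre the Grassmannian $G(n-k,\mathbb{F}^n)$ by flags and slice $A$ by translates of suitable $(l+1)$- or $(m+1)$-dimensional affine subspaces. This reduces to a two-dimensional problem inside each slice, with the integer parts $m,l$ contributing the combinatorial count $k(n-k)-(m-l)(k-l)$ of subspaces incident to a fixed configuration. The fractional parts $\beta,\gamma$ are then controlled by the planar case above. The split between $\gamma>\beta$ and $\gamma\leq\beta$ arises from whether the slice density forces one more integer dimension, that is, from $m$ versus $m+1$.

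The main obstacle is checking that Gan's induction only uses Theorem \ref{thm:furstenberg} with exactly the constants and ranges we have. In particular, the condition $r\geq 16|A|/p$ in Theorem \ref{thm:rrichF} must be compatible with the $pr$ term, and the $p^{\varepsilon}$ losses from dyadic pigeonholing must not accumulate across the induction. I would first try to handle these losses by absorbing all pigeonholing into the $\varepsilon$ in $E_{s-\varepsilon}$.
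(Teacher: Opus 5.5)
Your overall route is the paper's route. The paper gives no argument for this proposition beyond the remark that Theorem \ref{thm:furstenberg} is the input to Gan's reduction, and it obtains Theorem \ref{thm:furstenberg} from Theorem \ref{thm:rrichF} by the same case split you describe. So deferring the general $(n,k)$ case to Gan is at the paper's own level of detail. Your description of that reduction (flags, slicing by $(l+1)$- or $(m+1)$-planes) is heuristic and cannot be checked against anything in the paper; simply cite Gan. The planar case you do write out, however, is wrong as stated in two places.

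First, the Furstenberg input has to be a minimum, $|E|\gtrsim\min\{r^{3/2}|\mathcal{L}|^{1/2},\,r|\mathcal{L}|,\,pr\}$, which is what the case split actually gives. The sum form printed in Theorem \ref{thm:furstenberg} is false: a single line carrying $r$ points violates it. Read literally, your inequality $p^a\gtrsim r^{3/2}(Np^s)^{1/2}+rNp^s+pr$ forces $N\lesssim 1$ and $s\geq 1$. This contradicts the grid $A=\{1,\dots,\lfloor p^{a/2}\rfloor\}^2$: for $a/2<s\leq\min\{a,1\}$, the $\sim p^{2s-a}$ rational directions $(u,v)$ with $|u|,|v|\leq p^{s-a/2}$ all have $|\pi_V(A)|\lesssim p^{s}$.

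Second, keeping the lines with at least $|A|p^{-s}$ points does give rich lines covering half of $A$. It gives no lower bound of order $p^{s}$ on how many there are, since a few very rich lines may carry that half. Without such a lower bound the Furstenberg estimate yields nothing useful. You must pigeonhole dyadically in the richness:
\begin{itemize}
\item there is $\lambda\gtrsim|A|p^{-s+\varepsilon}/\log p$ and $\gtrsim N/\log p$ directions, each containing $\gtrsim|A|/(\lambda\log p)$ lines of richness $\sim\lambda$;
\item the $p\lambda$ term is then inactive because $s\leq 1$;
\item the other two terms give $N\lesssim(\log p)^{O(1)}\max\{1,|A|\lambda^{-2}\}\lesssim_{\varepsilon}p^{\varepsilon+\max\{2s-a,0\}}$.
\end{itemize}
So the planar exponent is $\max\{2s-a,0\}$, not $\max\{2s-a-1,0\}$.

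Alternatively, you can skip pigeonholing entirely. The lines in the $N$ exceptional directions that meet $A$ number at most $Np^{s-\varepsilon}$, and they have exactly $N|A|$ incidences with $A$. So for $s\leq\min\{a,1\}$ and $p^{\varepsilon}$ larger than the implicit constant, Theorem \ref{thm:szemereditrotter} gives $N\lesssim\max\{1,p^{2s}/|A|\}$ directly.
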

    In particular, when $n=2$ and $k=1$, it impiles the conjecture by Chen \cite{zbMATH06854663}.
    \begin{corollary}
        Fix $a\in(0,2]$ and $s>0$.
        For $A\subset\mathbb{F}^2$ such that $|A|\geq p^a$, we have 
        \[|E_s(A;2,1)|\lesssim p^{\max\{0,2s-a\}}.\] 
    \end{corollary}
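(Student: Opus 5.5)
This corollary is the case $n=2$, $k=1$ of the preceding proposition. I would first verify it by evaluating $\mathbf{M}(a,s;2,1)$. I would then also sketch a direct argument from Theorem \ref{thm:furstenberg}, since in this dimension the proposition reduces to exactly that argument. One may assume $p^a\le |A|$ and $s\le a$; otherwise the bound $p^{2s-a}\ge p^{s}$ exceeds the trivial bound $|G(1,\mathbb{F}^2)|=p+1\lesssim p$ once $s\ge 1$, or the set is empty.

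\textbf{Direct argument.} Each direction $V\in G(1,\mathbb{F}^2)$ gives a family of $p$ parallel lines. $E_s$ is the set of directions for which $A$ is covered by at most $p^s$ such lines. For each $V\in E_s$, discard the lines $x+V$ meeting $A$ in fewer than $|A|/(2p^s)$ points. These lines carry at most $|A|/2$ points in total. The remaining lines, at least one and at most $p^s$ of them, are each $r$-rich with $r=|A|/(2p^s)$ and together cover at least $|A|/2$ points of $A$.

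To control the line count from below, I would work with incidences instead. Summing over $V\in E_s$, the rich lines carry at least $|E_s||A|/2$ incidences with $A$. Let $\mathcal{L}$ be the collection of all these rich lines; lines from different directions are distinct. We have $|\mathcal{L}|\ge |E_s|$ and $|\mathcal{L}|\le |E_s|p^s$.

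Now apply Theorem \ref{thm:szemereditrotter}:
\[
|E_s||A|\lesssim \frac{|A||\mathcal{L}|}{p}+|A|^{2/3}|\mathcal{L}|^{2/3}+|A|+|\mathcal{L}|.
\]
Insert $|\mathcal{L}|\le |E_s|p^s$ and compare terms.
\begin{itemize}
\item The first term is $|A||E_s|p^{s-1}$. It is harmless when $s<1$, up to the constant; when $s\ge1$ the trivial bound $p\le p^{2s-a}$ applies.
\item The term $|A|$ gives $|E_s|\lesssim 1$.
\item The term $|\mathcal{L}|$ gives $|A|\lesssim p^s$, so $E_s=\varnothing$ in the relevant range, up to constants.
\item The main term gives $|E_s||A|\lesssim |A|^{2/3}|E_s|^{2/3}p^{2s/3}$, that is, $|E_s|\lesssim p^{2s}/|A|\le p^{2s-a}$.
\end{itemize}
Combining the cases gives $|E_s|\lesssim p^{\max\{0,2s-a\}}$.

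\textbf{Main obstacle.} The delicate point is the first term $|A||\mathcal{L}|/p$ for $s$ close to $1$. There it is comparable to the left side, so the implied constant must be beaten. I would handle this with the threshold trick. Use the refined Theorem \ref{thm:rrichF}, which excludes this term once $r\ge 16|A|/p$; this holds when $p^s\le p/32$. For $p^s>p/32$, the bound $|E_s|\le p+1\lesssim p^{2s-a}$ holds trivially, because $2s-a\ge 2s-2+\ldots$; more precisely, since $a\le 2$ and $p^{s}\gtrsim p$, we get $p^{2s-a}\gtrsim p^{2-a}\cdot p^{0}$. Where this falls short, I would instead use the cleaner $r$-rich formulation: count $r$-rich lines via \eqref{eq:rrichF}, which gives $|E_s|\le|\mathcal{L}_r(A)|\lesssim |A|^2/r^3+|A|/r\lesssim p^{3s}/|A|+p^s$. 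The first piece is then combined with the per-direction incidence bound above.
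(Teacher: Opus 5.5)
\textbf{There is a genuine gap: the displayed bound is false in part of the stated range, and each step you use to dispose of an edge case is one of the false steps.}

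The paper gets the corollary only by setting $n=2$, $k=1$ in the preceding proposition. Carry out the evaluation you postponed:
\begin{itemize}
\item for $a-1<s\le\min\{a,1\}$ you get $\mathbf{M}(a,s;2,1)=\max\{0,2s-a\}$ (the subcase $\gamma\le\beta$, $m=0$ if $a\le 1$; the subcase $\gamma>\beta$, $m=1$ if $a>1$);
\item for $s>\min\{a,1\}$ you get $\mathbf{M}=1$;
\item the conclusion concerns $E_{s-\varepsilon}$ and carries a loss $p^{\varepsilon}$.
\end{itemize}
So specialization does not give the display unrestricted either. Concretely, three of your steps fail.

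(i) For $s>a$ you claim that either $s\ge 1$ or $E_s=\varnothing$. This is wrong. Take $a<s<\frac{1+a}{2}$, e.g.\ $a=\frac12$, $s=\frac35$, and $|A|=\lceil p^a\rceil$, which is at most $p^s$ for large $p$. Then $|\pi_V(A)|\le |A|\le p^s$ for every $V$. So $E_s=G(1,\mathbb{F}^2)$ has $p+1$ elements, while $p^{2s-a}=o(p)$.

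(ii) ``When $s\ge1$ the trivial bound $p\le p^{2s-a}$ applies'' fails whenever $2s-a<1$. For $A=\mathbb{F}^2$, $a=2$, $s=1$, every direction is exceptional, so $|E_1|=p+1$, while $p^{\max\{0,2s-a\}}=1$. This is exactly your regime $p^s>p/32$. Your fallback via \eqref{eq:rrichF} is unavailable there, since it requires $r\ge 16|A|/p$, i.e.\ $p^s\le p/32$; it would also only give the exponent $3s-a$.

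(iii) The bullet ``the term $|\mathcal{L}|$ gives $|A|\lesssim p^s$, so $E_s=\varnothing$'' needs $s<a$ strictly, and the endpoint $s=a$ is genuinely false. For $a<\frac12$, let $A$ consist of $N=\lceil p^a\rceil$ points $(t,t^2)$ with $t$ ranging over a Sidon set $T\subset\{1,\dots,\lfloor p/2\rfloor\}$. Each of the $\binom N2$ distinct slopes $t+u$ gives a direction $V$ with $|\pi_V(A)|\le N-1<p^a$. Hence $|E_a|\ge\binom N2\gtrsim p^{2a}$, which is not $\lesssim p^{a}$.

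\textbf{What your proposal does prove.} It proves the statement under the extra hypothesis $0<s<\min\{a,1\}$, the range where $\mathbf{M}$ equals $\max\{0,2s-a\}$. There your direct argument is complete, and in fact better than the specialization.

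Take $\mathcal{L}$ to be all lines parallel to some $V\in E_s$ that meet $A$; the richness pruning is unnecessary. Then $\mathcal{I}(A,\mathcal{L})=|E_s||A|$ and $|\mathcal{L}|\le |E_s|p^s$, and Theorem \ref{thm:szemereditrotter} gives
\[
|E_s||A|\lesssim |A||E_s|p^{s-1}+|A|^{2/3}|E_s|^{2/3}p^{2s/3}+|A|+|E_s|p^{s}.
\]
Once $p$ exceeds a constant depending on $\min\{1-s,a-s\}$, the first and last terms are absorbed into the left side (using $|A|\ge p^a$). That leaves $|E_s|\lesssim p^{\max\{0,2s-a\}}$; small $p$ is trivial.

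The paper's route quotes Gan's reduction of general $(n,k)$ exceptional sets to Theorem \ref{thm:furstenberg} and then specializes. Compared with that, your argument uses only Theorem \ref{thm:szemereditrotter}, has no $\varepsilon$-loss, and has constants depending only on $a,s$. State that restricted version and drop the ``main obstacle'' paragraph, which tries to prove something false.
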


    \subsubsection{Sum-product estimates}
    Theorem \ref{thm:szemereditrotter} can be used to improve the sum-product estimate in $\mathbb{F}$.

    Erd\"os-Szemer\'edi \cite{zbMATH03834055} seek to establish that for $A\subset\mathbb{Z}\text{ or }\mathbb{R}$ and any $\varepsilon>0$,
    \[\max\{|A+A|,|A\cdot A|\}\gtrsim_\varepsilon |A|^{2-\varepsilon}.\]
    Here $A+A=\{a+b:a,b\in A\}$ and $A\cdot A:=\{a\cdot b:a,b\in A\}$.
    In $\mathbb{R}$, Elekes \cite{zbMATH01100474} instigated the use of tools from incidence geometry in the study of the sum-product problem.
    Specifically, he proved Szemer\'edi-Trotter theorem in $\mathbb{R}^2$ can imply 
    \[\max\{|A+A|,|A\cdot A|\}\gtrsim |A|^{5/4},\quad {\text{for finite }}A\subset \mathbb{R}.\]
    Naturally, one might ask whether there are similar sum-product estimate on finite field $\mathbb{F}$.
    Bourgain-Katz-Tao \cite{zbMATH02121750} proved that if $p^\alpha<|A|<p^{1-\alpha}$ holds for some $0<\alpha<1/2$, then there esists $\varepsilon=\varepsilon(\alpha)>0$ such that 
    \[\max\{|A+A|,|A\cdot A|\}\gtrsim_\alpha |A|^{1+\varepsilon}.\]
    Notably, for any $1\leq N\leq p$, Garaev \cite{zbMATH05308790} constructed set $A\subset \mathbb{F}$ such that $|A|=N$ and 
    \begin{equation}\label{eq:sumexample}
        \max\{|A+A|,|A\cdot A|\}\lesssim p^{1/2}N^{1/2}.
    \end{equation}
    Garaev also proved the lower bound 
    \[\max\{|A+A|,|A\cdot A|\}\gtrsim \min\{|A|^{2}p^{-1/2},|A|^{1/2}p^{1/2}\},\]
    which, as (\ref{eq:sumexample}) shows, is sharp in the range $|A|>p^{2/3}$.
    When $|A|\lesssim p^{1/2}$, the best known result is due to Mohammadi-Stevens \cite{zbMATH07672857}, they proved 
    \[\max\{|A+A|,|A\cdot A|\}\gtrsim (\log|A|)^{\mathcal{O}(1)} |A|^{5/4}.\]
    By Theorem \ref{thm:szemereditrotter} and Elekes' method, we can improve Mohammadi-Stevens' $5/4$ bound without $\log$-loss to the whole remaining range $|A|\leq p^{2/3}$.
    \begin{theorem}\label{thm:sumproduct}
        For $A\subset \mathbb{F}$, we have 
        \[\max\{|A+A|,|A\cdot A|\}\gtrsim\min\{(p|A|)^{1/2},|A|^{5/4}\}.\]
        In particluar, if $|A|\leq p^{2/3}$, then 
        \[\max\{|A+A|,|A\cdot A|\}\gtrsim|A|^{5/4}.\]
    \end{theorem}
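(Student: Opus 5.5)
We follow Elekes' argument, with Theorem \ref{thm:szemereditrotter} replacing the real Szemer\'edi--Trotter bound. We may assume $0\notin A$, since removing $0$ changes $|A|$ by at most one. Set
\[P:=(A+A)\times(A\cdot A)\subset\mathbb{F}^2 .\]
For each pair $(b,c)\in A\times A$, let $\ell_{b,c}$ be the line $\{(x,y): y=c(x-b)\}$. The lines $\ell_{b,c}$ are pairwise distinct, because $c\neq 0$ and the pair $(b,c)$ is recovered from the slope and the $x$-intercept. So $\mathcal{L}=\{\ell_{b,c}\}$ satisfies $|\mathcal{L}|=|A|^2$. For every $a\in A$ the point $(a+b,ac)$ lies in $P\cap\ell_{b,c}$, and distinct $a$ give distinct points. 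Hence $\mathcal{I}(P,\mathcal{L})\geq|A|^3$.

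Write $N:=|A|$ and $M:=\max\{|A+A|,|A\cdot A|\}$, so that $|P|\leq M^2$. Applying Theorem \ref{thm:szemereditrotter} gives
\[N^3\lesssim \frac{M^2N^2}{p}+M^{4/3}N^{4/3}+M^2+N^2 .\]
We then argue according to which term dominates the right-hand side.
\begin{enumerate}[(i)]
\item If the first term dominates, then $M^2\gtrsim pN$, that is, $M\gtrsim (p|A|)^{1/2}$.
\item If the second term dominates, then $M^{4/3}\gtrsim N^{5/3}$, that is, $M\gtrsim N^{5/4}$.
\item If the third term dominates, then $M\gtrsim N^{3/2}\geq N^{5/4}$.
\item If the fourth term dominates, then $N^3\lesssim N^2$, which forces $N\lesssim 1$. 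In that case the claim holds trivially, because $M\geq N$ always.
\end{enumerate}
In every case $M\gtrsim\min\{(p|A|)^{1/2},|A|^{5/4}\}$, which is the claim.

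For the particular case, suppose $|A|\leq p^{2/3}$. Then $|A|^{5/4}\leq (p|A|)^{1/2}$, since this inequality is equivalent to $|A|^{3/4}\leq p^{1/2}$. So the minimum equals $|A|^{5/4}$.

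There is no real obstacle here: all the difficulty sits in Theorem \ref{thm:szemereditrotter}, which we are allowed to assume. The only points needing care are the bookkeeping above, namely that the lines are distinct, that the removal of $0$ is harmless, and that the lower-order terms of the incidence bound are absorbed.
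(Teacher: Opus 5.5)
Your proposal is correct and follows the paper's proof essentially verbatim. Like the paper, you use Elekes' configuration $P=(A+A)\times(A\cdot A)$ with the $|A|^2$ lines $y=c(x-b)$, the lower bound $\mathcal{I}(P,\mathcal{L})\geq|A|^3$, and Theorem \ref{thm:szemereditrotter} to conclude. Two details of yours are bookkeeping the paper passes over silently: removing $0$ from $A$, which makes the lines genuinely distinct so that $|\mathcal{L}|=|A|^2$, and the explicit case analysis on which term of the incidence bound dominates.
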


    For finite set $A$, consider the product set of the difference set
    \[(A-A)\cdot(A-A):=\{(a-b)\cdot(c-d):a,b,c,d,\in A\}.\]
    One expects that $(A-A)\cdot(A-A)$ will always be large in comparison to the input set $A$.
    In $\mathbb{R}$, Netwon-Rudnev \cite{zbMATH06509375} proved that 
    \[|(A-A)\cdot(A-A)|\gtrsim \frac{|A|^2}{\log|A|},\quad \text{for finite }A\subset\mathbb{R}.\]
    In $\mathbb{F}$, we have the similar estimate.
    \begin{theorem}\label{thm:diffproduct}
        For $A\subset \mathbb{F}$ satisfies $|A|\geq 2$, there exists a subset $B\subset A^2$ with $|B|\geq |A|^2/2$ such that for any $(a,b)\in B$, 
        \[|(A-a)\cdot(A-b)|\gtrsim \min\left\{\frac{|A|^2}{\log|A|},p\right\}.\]
        In particlur, 
        \[|(A-A)\cdot(A-A)|\gtrsim \min\left\{\frac{|A|^2}{\log|A|},p\right\}.\]
    \end{theorem}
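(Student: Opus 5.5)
This follows the Newton–Rudnev / Roche-Newton argument in $\mathbb{R}$, which deduces the $|A|^2/\log|A|$ bound for $(A-a)\cdot(A-b)$ from Szemer\'edi--Trotter. Here Theorem \ref{thm:szemereditrotter} replaces the real incidence bound, and the extra term $|A||\mathcal{L}|/p$ produces the cap at $p$.

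\textbf{Setup.} Fix $(a,b)\in A^2$ and put $S=(A-a)\cdot(A-b)$. Ignoring the at most $2|A|$ zero products, the map $(c,d)\mapsto (c-a)(d-b)$ sends the $\gtrsim|A|^2$ pairs with $c\neq a$, $d\neq b$ into $S\setminus\{0\}$. By Cauchy--Schwarz,
\[|S|\gtrsim \frac{|A|^4}{E(a,b)},\]
where $E(a,b)$ counts quadruples $(c,d,c',d')\in A^4$ with $(c-a)(d-b)=(c'-a)(d'-b)\neq0$. It therefore suffices to show
\[\sum_{(a,b)\in A^2}E(a,b)\lesssim |A|^4\log|A|+\frac{|A|^6}{p},\]
which I will call $(\ast)$. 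Given $(\ast)$, Markov's inequality yields $B\subset A^2$ with $|B|\ge|A|^2/2$ on which $E(a,b)\lesssim |A|^2\log|A|+|A|^4/p$. This gives $|S|\gtrsim \min\{|A|^2/\log|A|,p\}$ on $B$, and the second claim follows since $(A-a)\cdot(A-b)\subset (A-A)\cdot(A-A)$.

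\textbf{Reduction to collinear triples.} The relation $(c-a)(d-b)=(c'-a)(d'-b)$ rewrites as
\[\frac{c-a}{c'-a}=\frac{d'-b}{d-b}=\lambda .\]
So, as in Newton--Rudnev, $\sum E(a,b)$ is bounded by counting pairs of points $(c,d'),(c',d)\in A\times A$ together with a third point $(a,b)$ that are related by a dilation about $(a,b)$ with ratio $\lambda$ and $1/\lambda$ in the two coordinates. Grouping by the ratio $\lambda$ recasts this as counting incidences between the point set $P=A\times A$ ($|P|=|A|^2$) and the family of lines
\[\ell_{\lambda,t}=\{(x,y): y=\lambda x+t\},\]
quite possibly after a projective change of variables. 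Each solution corresponds to a pair of points of $P$ on a common line in a fixed pencil. Hence
\[\sum_{(a,b)} E(a,b)\lesssim \sum_{\ell} |\ell\cap P|^2 ,\]
summed over the lines in the relevant family. I expect this is essentially the count $T(A)$ of collinear triples in $A\times A$, as in Roche-Newton's approach to $|(A-A)(A-A)|$.

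\textbf{Dyadic estimate.} For each $r$, Theorem \ref{thm:rrichF} applied with $|A|\to|A|^2$ gives, whenever $r\ge 16|A|^2/p$,
\[|\mathcal{L}_r(P)|\lesssim \frac{|A|^4}{r^3}+\frac{|A|^2}{r}.\]
Since $r\le|A|$, the first term dominates. Thus the lines with $|\ell\cap P|\sim r$ contribute $\lesssim r^2\cdot |A|^4/r^3=|A|^4/r$ to $\sum|\ell\cap P|^2$. The count I actually need is the triple-type sum $\sum r^3|\mathcal{L}_r|$, and there each dyadic level contributes $\lesssim|A|^4$. Summing over the $\log|A|$ dyadic scales gives the term $|A|^4\log|A|$. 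For $r<16|A|^2/p$, bound the contribution trivially by
\[\sum_{\ell}|\ell\cap P|^3\lesssim \frac{|A|^2}{p}\sum_{\ell}|\ell\cap P|^2\lesssim \frac{|A|^2}{p}\cdot |A|^4 .\]
Here $\sum_\ell |\ell\cap P|^2\lesssim|P|^2$, since each pair of points determines one line in each pencil. This gives the $|A|^6/p$ term.

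\textbf{Main obstacle.} The hard step is getting the bookkeeping in the reduction exactly right: an honest identity expressing $\sum_{a,b}E(a,b)$ through collinear triples (or rich lines) of $A\times A$, with the degenerate cases $\lambda=0,1$ and vertical/horizontal lines handled separately. Those degenerate contributions are $O(|A|^4)$ by trivial counting. I would first try Roche-Newton's identity: $T(A)$ equals the number of solutions of $(b-a)(d'-c')=(b'-a')(d-c)$ with shared base point. Once the correct counting identity is in place, the rest is the dyadic use of Theorem \ref{thm:rrichF} sketched above.
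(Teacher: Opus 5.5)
Your architecture is the paper's. Cauchy--Schwarz and a Markov choice of $B$ reduce everything to $\sum_{(a,b)\in A^2}E(A-a,A-b)\lesssim|A|^4\log|A|+|A|^6/p$. This sum is bounded by the number $T(A)$ of collinear triples in $P=A\times A$. Then $T(A)$ is controlled by a trivial bound on lines with at most $r=\max\{2,\lceil 16|P|/p\rceil\}$ points, plus Theorem~\ref{thm:rrichF} on dyadic levels above $r$. Your observation that $|\ell\cap P|\le|A|$ makes the $|P|^2/r^3$ term dominate is exactly why each level costs $O(|A|^4)$.

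The genuine gap is the middle step, which you leave open as the ``main obstacle'' and which, as written, is wrong. The inequality $\sum_{(a,b)}E(a,b)\lesssim\sum_\ell|\ell\cap P|^2$ cannot be what you want. Restricted to lines with at least two points (as your later claim $\sum_\ell|\ell\cap P|^2\lesssim|P|^2$ presumes), its right side is at most $2|P|^2$, so it would prove the theorem with no logarithm. But for $A=\{1,\dots,n\}$ with $n<p$, the left side is at least the number of collinear triples in the $n\times n$ integer grid minus $O(n^4)$, which is of order $n^4\log n$. Your dyadic paragraph does switch to the cubic sum $\sum r^3|\mathcal{L}_r|$, which is the right quantity, but nothing in the proposal connects $\sum E$ to it. The ``ratios $\lambda$ and $1/\lambda$'' picture is also misleading: it describes pairing $(c,d)$ with $(c',d')$, which produces hyperbolas rather than lines. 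No pencil or projective change of variables is needed.

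The missing idea is one line, and it is already latent in your equation $\frac{c-a}{c'-a}=\frac{d'-b}{d-b}$. The quantity $(c-a)(d-b)-(c'-a)(d'-b)$ is the determinant of the vectors $(c,d')-(a,b)$ and $(c',d)-(a,b)$. So the energy equation holds if and only if $(a,b)$, $(c,d')$, $(c',d)$ are collinear. Since $(a,b,c,d,c',d')\mapsto\bigl((a,b),(c,d'),(c',d)\bigr)$ is a bijection $A^6\to P^3$, this gives $\sum_{(a,b)}E(A-a,A-b)\le T(A)$ immediately, with no degenerate cases to separate. This is exactly the paper's Step II.

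There is also a smaller slip. Your proxy $\sum_\ell|\ell\cap P|^3$ and the claim $\sum_\ell|\ell\cap P|^2\lesssim|P|^2$ fail once one-point lines are included. Each point of $P$ lies on $p+1$ lines, so these lines contribute up to $(p+1)|P|$, which can exceed $|A|^4\log|A|$. Count instead $T(A)\lesssim|P|^2+\sum_\ell\binom{|\ell\cap P|}{3}$, where the $|P|^2$ absorbs triples with a repeated point. On lines with at most $r$ points, use $\binom{k}{3}\le r\binom{k}{2}$ together with $\sum_\ell\binom{|\ell\cap P|}{2}=\binom{|P|}{2}$, giving $r|P|^2\lesssim|P|^2+|P|^3/p$. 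With these two repairs your argument coincides with the paper's proof.
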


    \subsubsection{Fourier restriction problem}
    The Fourier restriction conjecture in $\mathbb{R}^d$ is proposed by Stein \cite{zbMATH03695779}.
    \begin{conjecture}\label{conj:rrestriction}
        Assume $\mathbb{P}^{d-1}$ is the paraboloid in $\mathbb{R}^d$, $\opd\sigma$ is the surface measure on $\mathbb{P}^{d-1}$.
        For function $f:\mathbb{P}^{d-1}\to\mathbb{C}$, 
        \begin{equation}\label{eq:rrestriction}
            \|(f\opd\sigma)^\vee\|_{L^\alpha(\mathbb{R}^d)}\lesssim\|f\|_{L^\beta(\mathbb{P}^{d-1},\opd\sigma)}
        \end{equation}
        holds iff. $\alpha>\frac{2d}{d-1}$ and $\alpha\geq\frac{d+1}{d-1}\beta'$.
        Here $\beta'$ is the Lebesgue conjugate of $\beta$.
    \end{conjecture}
    Stein-Tomas \cite{zbMATH03467755, zbMATH03957000} have proved that (\ref{eq:rrestriction}) holds for $\alpha\geq \frac{2(d+1)}{d-1},\; \beta=2$.
    And Tao \cite{zbMATH02067279} have used bilinear method to prove 
    \[\||(f_1\opd\sigma)^\vee(f_2\opd\sigma)^\vee|^{1/2}\|_{L^\alpha}\lesssim\|f_1\|_{L^2}^{1/2}\|f_2\|_{L^2}^{1/2},\quad {\rm{dist}}(\supp(f_1),\supp(f_2))\sim 1\]
    holds for $\alpha>\frac{2(d+2)}{d}$.
    By a bilinear reduction, he obtained (\ref{eq:rrestriction}) holds for $\alpha>\frac{2(d+2)}{d}, \beta'=\frac{d-1}{d+1}\alpha$.
    In particular, when $d=3$, it holds for $\alpha>10/3$.
    The best known result for Conjecture \ref{conj:rrestriction} when $d=3$ is due to Wang-Wu \cite{wang2024restrictionestimatesusingdecoupling}, where they proved (\ref{eq:rrestriction}) holds for $\alpha=\beta>22/7$.

    Mockenhaupt-Tao \cite{zbMATH02103577} first studied restriction estimate in $\mathbb{F}^3$.
    Let $R^*(2\to \alpha)$ denote the estimate 
    \[\|(f\opd\sigma)^\vee\|_{L^\alpha(\mathbb{F}^3)}\lesssim \|f\|_{L^2(\mathbb{P}^2,\opd\sigma)}\]
    Here $\opd \sigma$ is the surface measure on paraboloid $\mathbb{P}^2$ in $\mathbb{F}_*^3$ (See the definition in Section \ref{sec:fourier}).
    When $p\equiv 1\mod 4$, $\mathbb{P}^2$ will be isotropic because $-1$ is square.
    Then the estimate $R^*(2\to 4)$ is sharp.
    For $p\equiv 3\mod 4$, Mockenhaupt-Tao put forward the following conjecture.
    \begin{conjecture}[\cite{zbMATH02103577}]\label{conj:restriction}
        If $p\equiv 3\mod 4$, then $R^*(2\to\alpha)$ holds for $\alpha\geq3$.
    \end{conjecture}
    In \cite{zbMATH02103577}, they proved that if $p\equiv 3\mod 4$, then $R^*(2\to 18/5+\varepsilon)$ holds.
    The best known result is due to Lewko \cite{lewko2026bilinearapproachfinitefield}, where he use Mockenhaupt-Tao's bilinear method and some incidence estimates to prove $R^*(2\to 176/51+\varepsilon)$ for $p\equiv 3\mod 4$.
    By our improved incidence estimate and bilinear argument, we can prove the following restriction estimate.
    \begin{theorem}\label{thm:restriction}
        If $p\equiv 3\mod 4$, then $R^*(2\to\alpha)$ holds for $\alpha>\frac{10}{3}$.
    \end{theorem}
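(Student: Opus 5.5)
The plan follows the Mockenhaupt--Tao bilinear scheme, with the incidence input replaced by Theorem \ref{thm:szemereditrotter}. By the standard finite-field reduction, $R^*(2\to\alpha)$ is equivalent, up to $p^{\varepsilon}$ losses which are harmless for the open range $\alpha>10/3$, to the following restricted-type estimate. For a set $E\subset\mathbb{P}^2$ with $f=1_E$,
\[\|(1_E\opd\sigma)^\vee\|_{L^\alpha(\mathbb{F}^3)}\lesssim \|1_E\|_{L^2(\opd\sigma)}.\]
This uses dyadic pigeonholing of $f$ into level sets. Since $\alpha>10/3>2$, we may square: $\|(1_E\opd\sigma)^\vee\|_{\alpha}^2=\|((1_E\opd\sigma)^\vee)^2\|_{\alpha/2}$. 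We then interpolate $((1_E\opd\sigma)*(1_E\opd\sigma))^\vee$ between an $L^\infty$ bound and an $L^2$ bound.

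By Plancherel, the $L^2$ bound reduces to the additive energy $\Lambda(E)=|\{(x_1,x_2,x_3,x_4)\in E^4: x_1+x_2=x_3+x_4\}|$. The $L^\infty$ bound is trivial, of order $|E|^2/|\mathbb{P}^2|^2$ in the normalized measure. The key step is an energy estimate for $E\subset\mathbb{P}^2$ when $p\equiv3\bmod 4$.

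Fix a sum $s=x_1+x_2$. The pairs $(x_1,x_2)\in\mathbb{P}^2\times\mathbb{P}^2$ with that sum have $x_1$ lying on the intersection of $\mathbb{P}^2$ with its reflected translate $s-\mathbb{P}^2$. This intersection is a plane section of the paraboloid, i.e.\ a conic $C_s$. Since $-1$ is a nonsquare, this conic is a genuine (non-degenerate, non-line-containing) circle, apart from the degenerate case which contributes $O(|E|^2)$. Hence
\[\Lambda(E)=\sum_s |E\cap C_s|^2 .\]
Projecting $\mathbb{P}^2$ to $\mathbb{F}^2$ via $(\underline{x},\underline{x}\cdot\underline{x})\mapsto \underline{x}$ turns $E$ into a planar set $A$ with $|A|=|E|$, and the conics $C_s$ into circles $\{\underline{x}:|\underline{x}-\underline{s}/2|^2=\rho\}$.

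I would then linearize: on $\mathbb{P}^2$ the condition $x_1+x_2=x_3+x_4$ is equivalent to $\underline{x}_1-\underline{x}_3\perp \underline{x}_1-\underline{x}_4$ together with a linear relation. Equivalently, $\Lambda(E)$ counts right-angle configurations, i.e.\ incidences between the points $A$ and the lines $\{\underline{y}:(\underline{y}-\underline{x}_1)\cdot v=0\}$ through points of $A$. The cleaner route is to write $\Lambda(E)=\sum_{x_1,x_3\in A}|A\cap \ell_{x_1,x_3}|$, where $\ell_{x_1,x_3}$ is the line through $x_1$ perpendicular to $x_1-x_3$. These are $|A|^2$ lines, counted with multiplicity; since $-1$ is a nonsquare, there are no isotropic directions, so each line arises from at most $O(|A|)$ pairs. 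Applying Theorem \ref{thm:szemereditrotter} with multiplicities, via a dyadic decomposition over the rich lines $\mathcal{L}_r(A)$ and Theorem \ref{thm:rrichF}, should give
\[\Lambda(E)\lesssim \frac{|E|^3}{p}+|E|^{5/2},\]
which is the Elekes/Szemer\'edi--Trotter-type energy bound replacing Mockenhaupt--Tao's $|E|^{3}/p+|E|^{8/3}$-type estimate.

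Finally, insert this into the interpolation and optimize over $|E|$. In the regime $|E|\le p^{2}$ the term $|E|^{5/2}$ dominates exactly when $|E|\lesssim p^2$. Checking the exponents, Hölder between $L^2$ and $L^\infty$ with energy exponent $5/2$ gives the endpoint $\alpha=10/3$, reached with an $\varepsilon$ loss from pigeonholing. Hence every $\alpha>10/3$ holds. I expect the main obstacle to be the energy bound: multiplicities of the perpendicular lines must be handled so that Theorem \ref{thm:rrichF} applies in the range $r\ge 16|A|/p$, with the low-rich part absorbed into $|E|^3/p$. It is also delicate to verify that the lines are not overly concentrated, and this is exactly where $p\equiv3\bmod4$ (no isotropic lines) is used. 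If the direct energy route loses too much, I would fall back on the bilinear version: split $E$ into two separated caps and bound $\|(1_{E_1}\opd\sigma)^\vee(1_{E_2}\opd\sigma)^\vee\|_{L^{\alpha/2}}$ via the same incidence count. This is as in Lewko, but with the sharp bound from Theorem \ref{thm:szemereditrotter}.
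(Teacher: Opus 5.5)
There is a genuine gap, and it sits in the interpolation step. With $g=(1_E\opd\sigma)^\vee$ and $h=g^2$, H\"older gives $\|h\|_q\le\|h\|_2^{2/q}\|h\|_\infty^{1-2/q}$ only for $q\ge 2$, that is, only for $\alpha=2q\ge 4$. At $\alpha=4$ the estimate $R^*(2\to4)$ is already elementary: in restricted form it is just $\Lambda(E)\lesssim p|E|^2$, from the circle count. For $\alpha\in(10/3,4)$ you need $q=\alpha/2<2$, and no combination of $L^2$ and $L^\infty$ bounds reaches that. Your exponent check, done correctly, returns $\|h\|_q\lesssim |E|/p^2$ for $q\ge2$ and nothing below.

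The problem is structural, not just the wrong choice of endpoints. Since $\|g\|_{L^4}^4=p^{-5}\Lambda(E)$, every energy bound is an $L^4$ statement; your $\Lambda(E)\lesssim|E|^{5/2}$ is exactly restricted $R^*(8/5\to4)$. Even log-convexity between $\|g\|_2=(|E|/p)^{1/2}$ and $\|g\|_4$, using the exact energy, fails. For $E=\mathbb{P}^2$ one has $\|g\|_2^2=p$ and $\Lambda(E)\sim p^5$, so it gives only $\|g\|_{10/3}\lesssim p^{1/10}$ against the target $O(1)$. The reason is that $(\opd\sigma)^\vee=\delta_0+K$ splits into a spike carrying the $L^4$ mass and a flat part $|K|=p^{-1}$ carrying the $L^2$ mass, and no interpolation of norms of $g$ separates them. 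The bilinear fallback does not escape this either, since an incidence count again controls only the $L^2$ norm of the product. Also, $R^*(2\to\alpha)$ means a bound uniform in $p$, so $p^{\varepsilon}$ losses from pigeonholing are not harmless.

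The missing idea is the physical-space argument of Mockenhaupt--Tao, which the paper uses. One proves the dual bound $\|\hat f\|_{L^2(\mathbb{P}^2,\opd\sigma)}\lesssim\|f\|_{L^s(\mathbb{F}^3)}$ for $s<10/7$, as follows.
\begin{enumerate}
\item Split $f$ into level sets $E_j\subset\mathbb{F}^3$ and write $\|\hat f_j\|^2=\|f_j\|_2^2+\langle f_j,f_j*K\rangle$.
\item Bound $\langle f_j,f_j*K\rangle\le|E_j|^{3/4}\sum_z\|f_{j,z}*K\|_4$, using $\|f_z*K\|_4^4\le p^{-1}R(E_{j,z})$ on horizontal slices.
\end{enumerate}

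Your combinatorics is relevant: indeed $\Lambda(E)=R(A)$ for the projected set $A$, and your perpendicular-line linearization is the right one. But it must be applied to the slices $E_{j,z}$ of physical-space sets, and its strength matters. The paper proves $R(A)\lesssim|A|^2\log(|A|+1)+|A|^3/p$ by charging each rectangle to its richest edge line and applying Theorem~\ref{thm:rrichF} dyadically. Combined with the trivial bounds $\|\hat f_j\|\le|E_j|^{1/2}+p^{-1/2}|E_j|$ and $\|\hat f_j\|\le p^{1/2}|E_j|^{1/2}$, this gives $\|\hat f_j\|\lesssim\log(|E_j|+1)^{1/8}|E_j|^{7/10}$. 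The loss is logarithmic in $|E_j|$ rather than in $p$, so the geometric sum over $j$ absorbs it when $s<10/7$.

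Your weaker bound $|A|^{5/2}+|A|^3/p$, inserted into the same scheme, gives only $|E_j|^{13/18}$, i.e.\ $\alpha>18/5$. That is exactly the Mockenhaupt--Tao exponent.
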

    We remark that Lewko \cite{lewko2026bilinearapproachfinitefield} pointed out that Conjecture \ref{conj:restriction} could imply the integers on paraboloid $\Gamma:=\{(n_1,n_2,n_1^2+n_2^2):n_1,n_2\in\mathbb{Z}\}$ and integers on sphere $S_R:=\{n\in\mathbb{Z}^3:n_1^2+n_2^2+n_3^2=R\}$ for $R\neq 0$ are both $\Lambda(3)$-set.
    Bourgain \cite{zbMATH00404289} predicts $\Gamma$ is $\Lambda(r)$-set for all $r<4$ and $S_R$ is $\Lambda(r)$-set for all $r<6$.
    The wider range for the sphere reflecting the greater arithmetic sparsity of the discrete sphere.

    \subsection{Notations} 
    Throghout this paper, we use $|A|$ to denote the number of elements of finite set $A$.
    For $A,B\geq 0$, we use $A\lesssim B$ to denote $A\leq CB$ for absolute (large) constant $C> 0$.
    And we use $A\sim B$ to mean $A\lesssim B$ and $B\lesssim A$.
    For $x\in\mathbb{F}^n$, we write $x=(\bar{x},x_n)\in\mathbb{F}^{n-1}\times \mathbb{F}$.
    For function $P$, we denote the zero set of $P$ as $Z(P)$.

    \section{Polynomial Decomposition}\label{sec:poly}
    Assume $\mathfrak{K}$ is algebraically closed field.
    For polynomial $P\in\mathfrak{K}[x,y,z]$, let $\deg_{x,y}P$ be the total degree of $P$ with respect to variables $x,y$ and $\deg_z P$ be the degree of $P$ with respect to variable $z$.
    Let 
    \[\mathcal{S}(P):=\{(a,b)\in\mathfrak{K}^2:P(x,ax+b,a)\equiv 0 \text{ in } \mathfrak{K}[x]\}.\]
    For line $\ell_{a,b}:=\{(x,ax+b):x\in\mathfrak{K}\}$, define the lifted line $\tilde{\ell}_{a,b}:=\{(x,ax+b,a):x\in\mathfrak{K}\}$.
    Then $(a,b)\in \mathcal{S}(P)$ is equivalent to $\tilde{\ell}_{a,b}\subset Z(P)$.

    We construct a decomposition of $\mathcal{S}(P)$.
    Consider the directional derivative $\mathcal{D}=\partial_x+z\partial_y$ along the lifted line.
    Then $\mathcal{D}P|_{\tilde{\ell}_{a,b}}\equiv 0$ for $(a,b)\in\mathcal{S}(P)$.
    Let $Q$ be the product of the irreducible polynomial factor $F_j$ of $P$ such that $\mathcal{D}F_j\equiv 0$.
    Then $Z(Q)$ is the degenerate part of $Z(P)$.
    Denote $P'=\frac{P}{Q}$, it suffices to consider $P'$.
    It's clear that $\tilde{\ell}_{a,b}$ is the common root of $P'$ and $\mathcal{D}P'$ for $(a,b)\in\mathcal{S}(P')$.
    In other words, 
    \begin{equation*}
        \bigcup_{(a,b)\in\mathcal{S}(P')}\tilde{\ell}_{a,b}\subset Z(P')\cap Z(\mathcal{D}P').
    \end{equation*}
    Note that $\gcd(P',\mathcal{D}P')=1$.
    By B\'ezout theorem, we obtain 
    \begin{equation}\label{eq:bezoutbound}
        |\mathcal{S}(P')|\lesssim (D_1+D_2)^2.
    \end{equation}

    On the other hand, there is an additional structure of the lifted lines $\tilde{\ell}_{a,b}$.
    Consider the standard open immersion:
    \begin{align*}
        \iota: \mathfrak{K}^3&\hookrightarrow\mathbb{P}^2\times\mathbb{P}^1\\
        (x,y,z)&\mapsto ([x:y:1],[z:1])
    \end{align*}
    Here we write the point in $\mathbb{P}^2$ under homogeneous coordinates to be $[X:Y:W]$ and point in $\mathbb{P}^1$ to be $[R:T]$
    When $W\neq 0$ ($T\neq 0$), the corresponding affine coordinates are $x=\frac{X}{W},\; y=\frac{Y}{W}$ ($z=\frac{R}{T}$).
    Hence the lifted line $\tilde{\ell}_{a,b}$ could be rewritten under homogeneous coordinates:
    \[\bar{\ell}_{a,b}:=\{Y-aX-bW=0\}\times\{[a:1]\},\]
    which is a product of lines in $\mathbb{P}^2$ with a fixed point in $\mathbb{P}^1$.
    Denote $d_1:=\deg_{x,y}P'\leq D_1$ and $d_2:=\deg_zP'\leq D_2$.
    Extend $P'$ to infinity by 
    \[P'^h(X,Y,W;R,T):=W^{d_1}T^{d_2}P'\left(\frac{X}{W},\frac{Y}{W},\frac{R}{T}\right).\] 
    And similarly extend $\mathcal{D}P'$ to infinity by $(\mathcal{D}P')^h$.
    It suffices to count the number of $\bar{\ell}_{a,b}\subset Z(P'^h)\cap Z((\mathcal{D}P')^h)$.

    Consider the slice 
    \[\left(\bigcup_{(a,b)\in\mathcal{S}(P')}\bar{\ell}_{a,b}\right)\cap \left(L\times\mathbb{P}^1\right),\]
    where $L:=\{\alpha X+\beta Y+\gamma W=0\}$ is line in $\mathbb{P}^2$ for some $\alpha,\beta,\gamma\in\mathfrak{K}$ such that $\alpha+\beta a\neq 0$ holds for all $(a,b)\in\mathcal{S}(P')$.
    We can always find such $\alpha,\beta$ because there are only finite $a$ by (\ref{eq:bezoutbound}), but $\mathfrak{K}$ is infinite.
    Then for each $(a,b)\in\mathcal{S}(P')$, $\bar{\ell}_{a,b}\cap (L\times\mathbb{P}^1)$ is an unique point.
    In the affine coordinates, we can express this point clearly:
    Solving $\alpha x+\beta y+\gamma=0$ such that $x=t,\; y=at+b,\;z=a$, we obtain $t_{a,b}=-\frac{\beta b+\gamma}{\beta a+\alpha}$.
    This unique point can be expressed as $(t_{a,b},at_{a,b}+b,a)$.

    Now it suffices to count the number of solutions of equations:
    \begin{equation}\label{eq:resultingequations}
        \begin{cases}
            P'^h(X,Y,W;R,T)=0\\
            (\mathcal{D}P')^h(X,Y,W;R,T)=0\\
            \alpha X+\beta Y+\gamma W=0
        \end{cases}.
    \end{equation}
    By multihomogeneous B\'ezout theorem \cite[Example 4.9]{zbMATH06176082} on $\mathbb{P}^2\times\mathbb{P}^1$, the number of solutions of (\ref{eq:resultingequations}) $\leq (d_1-1)d_2+d_1(d_2+1)\lesssim D_1D_2$.
    
    In summary, we obtain 
    \[|\mathcal{S}(P)\setminus Z(Q)|\leq|\mathcal{S}(P')|\lesssim D_1D_2.\]
    This inequality improves the previous bound (\ref{eq:bezoutbound}).

    \medskip

    Now we give an elementary proof of the above argument.
    \begin{theorem}\label{thm:polynomialdec}
        Assume non-zero polynomial $P\in\mathfrak{K}[x,y,z]$ satisfies $\deg_{x,y}P\leq D_1$ and $\deg_z P\leq D_2$, where $\mathfrak{K}$ has characteristic zero or characteristic $p>D_1$ and $D_2\geq 1$.
        Then there exists non-zero polynomial $Q\in\mathfrak{K}[a,b]$ with total degree $\leq D_2$ such that 
        \begin{equation}
            |\mathcal{S}(P)\setminus Z(Q)|\lesssim D_1D_2.
        \end{equation}
    \end{theorem}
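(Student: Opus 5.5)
The plan is to straighten the lifted lines by a shear, isolate the degenerate factors, and then count the remaining pairs $(a,b)$ by a bidegree B\'ezout argument on a single slice $x=t$.

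\textbf{Step 1: straighten the lines.} For $(x,w,z)\in\mathfrak{K}^3$ set $R(x,w,z):=P(x,zx+w,z)$. This is an automorphism of $\mathfrak{K}[x,y,z]$ with inverse $w=y-zx$. Under it the operator $\mathcal{D}=\partial_x+z\partial_y$ becomes $\partial_x$ at fixed $(w,z)$. Moreover $(a,b)\in\mathcal{S}(P)$ if and only if $R(x,b,a)\equiv 0$ in $\mathfrak{K}[x]$.

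\textbf{Step 2: the degenerate factors and $Q$.} An irreducible factor $F$ of $P$ with $\mathcal{D}F\equiv 0$ becomes a factor of $R$ with $\partial_x\equiv 0$. Such a factor is free of $x$: in characteristic zero this is immediate, and in characteristic $p>D_1$ the $x$-degree is $\le D_1<p$, so no $x^p$-monomials can occur. Hence $F(x,y,z)=G(y-zx,z)$ for some $G\in\mathfrak{K}[w,z]$.

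I define $Q(a,b):=\prod G(b,a)$ over these factors, counted without multiplicity. Then every $(a,b)$ that is a zero of some such $G$ lies in $Z(Q)$.

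For the degree bound, each monomial $w^iz^j$ of $G$ contributes $(y-zx)^iz^j$, whose top $z$-power term is $(-x)^iz^{i+j}$. Distinct $i$ give distinct powers of $x$, so there is no cancellation among the top terms. Therefore $\deg_zF=\deg G$, and summing over factors gives $\deg Q\le\deg_zP\le D_2$.

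\textbf{Step 3: the remaining part $P'$.} Let $P'$ be the squarefree part of $P/\prod F$; replacing by the squarefree part does not change $\mathcal{S}$. If $(a,b)\in\mathcal{S}(P)\setminus Z(Q)$, then $\tilde\ell_{a,b}\not\subset Z(F)$ for each degenerate $F$. Since $\tilde\ell_{a,b}$ is irreducible and lies in $Z(P)$, it lies in $Z(P')$, so $\mathcal{S}(P)\setminus Z(Q)\subset\mathcal{S}(P')$.

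Every irreducible factor of $P'$ is simple and not killed by $\mathcal{D}$, so $\gcd(P',\mathcal{D}P')=1$. This again uses $p>D_1$: a factor $F$ dividing $\mathcal{D}F$ would force $\mathcal{D}F=0$ by degree reasons.

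\textbf{Step 4: slice at $x=t$ and count.} For $(a,b)\in\mathcal{S}(P')$ both $P'$ and $\mathcal{D}P'$ vanish on $\tilde\ell_{a,b}$. Fix $t\in\mathfrak{K}$. Then the point $(y,z)=(at+b,a)$ lies on both plane curves
\[C_1=Z\bigl(P'(t,y,z)\bigr),\qquad C_2=Z\bigl((\mathcal{D}P')(t,y,z)\bigr),\]
and the map $(a,b)\mapsto(at+b,a)$ is injective. The curve $C_1$ has $\deg_y\le D_1$ and $\deg_z\le D_2$, while $C_2$ has $\deg_y\le D_1$ and $\deg_z\le D_2+1$.

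Because $\gcd(P',\mathcal{D}P')=1$, their resultant in one variable is a nonzero polynomial. Hence for all but finitely many $t$ the two restricted polynomials have no common component. I choose such a $t$, which is possible since $\mathfrak{K}$ is infinite.

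Now I apply the bidegree B\'ezout theorem on $\mathbb{P}^1\times\mathbb{P}^1$: two curves of bidegrees $(m_1,n_1)$ and $(m_2,n_2)$ with no common component meet in at most $m_1n_2+m_2n_1$ points. This gives
\[|\mathcal{S}(P')|\le D_1(D_2+1)+D_1D_2\lesssim D_1D_2,\]
using $D_2\ge1$.

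\textbf{Expected main obstacle.} The hardest step is making this last count elementary with the sharp $D_1D_2$ rather than $(D_1+D_2)^2$ or $D_1^2D_2$. A naive resultant in $y$ followed by counting $y$ above each root $z$ loses a factor of $D_1$.

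I would first try to count via the resultant in $y$ itself. It is a polynomial in $z$ of degree $\le D_1(D_2+1)+D_1D_2$, by the standard degree bound for Sylvester determinants with weighted entries. A preliminary generic shear $y\mapsto y+cz$ would make distinct intersection points have distinct $z$-coordinates; note that this shear does not change the bidegree pattern relevant to the Sylvester bound. Then the number of intersection points is at most the number of roots of the resultant, giving the bound.

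Alternatively, I would simply invoke the multihomogeneous B\'ezout theorem as in the discussion preceding the theorem. A smaller point to verify is that the characteristic assumption $p>D_1$ is exactly what Steps 2 and 3 need.
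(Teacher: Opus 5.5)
Your proof is correct, but the decisive count goes by a different route from the paper's. Steps 1--3 agree with the paper's Step I and Fact 1. Your shear is the paper's $\tilde F_j(x,b,z)=F_j(x,zx+b,z)$, and your degree count for $Q$ is the same.

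In Step 4 you fix a generic slice $x=t$ and quote bidegree B\'ezout on $\mathbb{P}^1\times\mathbb{P}^1$. This is essentially the paper's preliminary multihomogeneous B\'ezout sketch with $L=\{x=t\}$, so that $L\times\mathbb{P}^1\cong\mathbb{P}^1\times\mathbb{P}^1$. You also supply the properness (no common component) that the sketch leaves implicit.

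The paper's actual proof never specializes $x$. It works over $\mathfrak{K}(x)$, so $\det M(z)\not\equiv 0$ follows directly from $\gcd(P',\mathcal{D}P')=1$. It then proves the needed B\'ezout bound by hand. The $n_a$ evaluation functionals $h\mapsto h(ax+b_i)$ are independent (Vandermonde) and kill the image of $M(a)$. Hence $(z-a)^{n_a}\mid\det M(z)$ and $\sum_a n_a\le\deg_z\det M(z)$.

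Your route is shorter and more geometric, but it rests on intersection theory on $\mathbb{P}^1\times\mathbb{P}^1$ and a choice of $t$. The paper's route is self-contained linear algebra.

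Your proposed elementary fallback does not work as written. The shear $y\mapsto y+cz$ leaves $z$-coordinates unchanged, so it cannot separate intersection points lying over the same root of $\mathrm{Res}_y$. It also raises $\deg_z$ up to $\deg_y+\deg_z$, so the Sylvester bound picks up a $D_1^2$ term. The shear that does move $z$ raises $\deg_y$ instead and costs an extra $\mathcal{O}(D_2^2)$; that is harmless when $D_2\lesssim D_1$, as in the application.

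In fact no shear is needed. Run the paper's corank argument over $\mathfrak{K}$ on your slice: it shows that $\mathrm{Res}_y$ vanishes at $z=a$ to order at least the number of distinct common roots $y$ above $a$. This gives the bound $m_1n_2+m_2n_1$ directly.

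There are three smaller repairs:
\begin{enumerate}
\item A single nonzero resultant only excludes common components of positive degree in the eliminated variable. You also need $\mathrm{Res}_z$, or a dimension count on $Z(P')\cap Z(\mathcal{D}P')$, to exclude a shared line $z=c$ in the slice.
\item $P'$ should be the product of the distinct irreducible factors with $\mathcal{D}F\neq 0$. If $\prod F$ is taken without multiplicity, the squarefree part of $P/\prod F$ still contains any degenerate factor of multiplicity $\geq 2$ in $P$.
\item The implication $F\mid\mathcal{D}F\Rightarrow\mathcal{D}F=0$ needs no hypothesis on $p$.
\end{enumerate}
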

    \begin{proof}
        
        \noindent{\bf{Step I.}} Construction of $Q$

        Witout loss of generality, we can assume $P$ is non-constant and square-free.
        Let $P=\prod_{j=1}^m F_j$, where $\{F_j\}$ are coprime irreducible polynomials.
        Consider differential operator $\mathcal{D}:=\partial_x+z\partial_y$.
        Then $F_j|\mathcal{D}F_j$ is equivalent to $\mathcal{D}F_j=0$.
        This is because if $\mathcal{D}F_j\neq 0$, then $\deg_{x,y}(\mathcal{D}F_j)\leq \deg_{x,y}F_j-1$.

        Fix a $F_j$ such that $\mathcal{D}F_j=0$.
        Define $\tilde{F}_j(x,b,z):=F_j(x,zx+b,z)$.
        Then $\partial_x \tilde{F}_j=\mathcal{D}F_j=0$, and $\deg_x\tilde{F}_j=\deg_x F_j(x,zx+b,z)\leq \deg_{x,y}F_j\leq D_1<p$.
        Thus $\tilde{F}_j$ is independent of $x$.
        Let 
        \[Q_j(a,b)=\tilde{F}_j(0,b,a)=\tilde{F}_j(x,b,a)=F_j(x,ax+b,a).\]
        It's clear that $F_j(x,ax+b,a)\equiv 0$ in $\mathfrak{K}[x]$ is equivalent to $Q_j(a,b)=0$.
        Now we prove $\deg Q_j=\deg_zF_j$.
        In fact, if we assume $Q_j(a,b)=\sum_{i+k\leq \deg Q_j}c_{ik}a^ib^k$, then $F_j(x,y,z)=\sum_{i+k\leq \deg Q_j}c_{ik}z^i(y-xz)^k$.
        The coefficient of $z^{\deg Q_j}$ is $\sum_{i+k=\deg Q_j}c_{ik}(-x)^k$, which is non-zero polynomial of $x$.

        Let $Q:=\prod_{j:\mathcal{D}F_j=0}Q_j$, then $\deg Q\leq D_2$.
        
        \medskip

        \noindent{\bf{Step II. }} Let $P':=\prod_{j:\mathcal{D}F_j\neq 0}F_j$, then $\mathcal{S}(P)\setminus Z(Q)\subset \mathcal{S}(P')$.
        We claim two facts.

        {\bf{Fact 1.}} $\gcd(P',\mathcal{D}P')=1$.

        For any $F_j\nmid \mathcal{D}F_j$, we can write 
        \[\mathcal{D}P'=\mathcal{D}\left(F_j\cdot\frac{P'}{F_j}\right)=(\mathcal{D}F_j)\frac{P'}{F_j}+F_j\mathcal{D}\left(\frac{P'}{F_j}\right).\]
        Then $\mathcal{D}P'\equiv (\mathcal{D}F_j)\frac{P'}{F_j}\mod F_j$.
        By $F_j\nmid \mathcal{D}F_j$, $\gcd(F_j,\frac{P'}{F_j})=1$ and the prime property of $F_j$ on $\mathfrak{K}[x,y,z]$, we have $F_j\nmid (\mathcal{D}F_j)\frac{P'}{F_j}$.
        Hence $F_j\nmid \mathcal{D}P'$.
        By the arbitrary of $F_j$ we obtain $\gcd(P',\mathcal{D}P')=1$.

        {\bf{Fact 2.}} $P'(x,y,a)\not\equiv 0$ for all $a\in\mathfrak{K}$.

        We write $P'(x,y,z)=\sum_{i,k}p_{ik}'(z)x^iy^k$.
        If $P'(x,y,a)\equiv 0$ for some $a\in\mathfrak{K}$, then $p'_{ik}(a)=0$ for all $i,k$.
        Then $(z-a)|p_{ik}'(z)\Rightarrow (z-a)|P'$.
        However, $\mathcal{D}(z-a)=0$, which contradicts to the definition of $P'$.

        \smallskip

        If $\deg_y P'=0$, then Fact 2 gives $P'(x,ax+b,a)\neq 0$.
        Thus $\mathcal{S}(P')=\varnothing$.
        At this time (\ref{thm:polynomialdec}) holds trivially.

        Now assume $u:=\deg_y(P')\geq 1$, $v:=\deg_{y}(\mathcal{D}P')\geq 0$, $d_1:=\deg_{x,y}P'\leq D_1$ and $d_2:=\deg_zP'\leq D_2$.
        Then $u\leq d_1$, $v\leq d_1-1$ and $\deg_z(\mathcal{D}P')\leq d_2+1$.
        Let $\mathfrak{K}(x)$ be the rational function field (with variable $x$), and 
        \[\mathfrak{K}(x)[z][y]_{<d}:=\left\{\sum_{j=0}^{d-1}a_j(z)y^j:a_j(z)\in\mathfrak{K}(x)[z]\right\}.\] 
        Consider $\mathfrak{K}(x)[z]$-linear map 
        \begin{align*} 
            \mathfrak{K}(x)[z][y]_{<v}\oplus \mathfrak{K}(x)[z][y]_{<u}&\to \mathfrak{K}(x)[z][y]_{<u+v}\\
            (A,B)&\mapsto AP'+B\mathcal{D}P'
        \end{align*}
        Its matrix $M(z)$ is called Sylvester matrix, which has size $(u+v)\times (u+v)$ with determinant $\det M(z)\not\equiv 0$.
        And $\deg_z\det M(z)\leq vd_2+u(d_2+1)\leq (d_1-1)d_2+d_1(d_2+1)\lesssim D_1D_2$.

        \medskip

        \noindent{\bf{Step III.}}
        Define $n_a:=|\{b\in\mathfrak{K}:P'(x,ax+b,a)\equiv 0\}|$, then $|\mathcal{S}(P')|=\sum_{a\in\mathfrak{K}}n_a$.
        We will prove that $(z-a)^{n_a}|\det M(z)$ in $\mathfrak{K}(x)[z]$.

        Fix $a\in\mathfrak{K}$.
        Since $P'(x,y,a)\not\equiv 0$ and $\deg_yP'=u$, each $b$ counted by $n_a$ gives a root $y=ax+b$.
        Thus $n_a\leq u$.
        Without loss of generality, we may assume $n_a$ is non-zero and enumerate the number of set $\{b_1,\dots,b_{n_a}\}$.
        Differentiating $P'(x,ax+b_i,a)\equiv 0$, we obtain 
        \[\partial_x P'(x,ax+b_i,a)+a\partial_yP'(x,ax+b_i,a)=\mathcal{D}P'(x,ax+b_i,a)\equiv 0.\]
        Hence $y=ax+b_i$ is the common root of $P'(x,y,a)$ and $\mathcal{D}P'(x,y,a)$.
        Define evaluation functionals 
        \begin{align*} 
            \omega_i:\mathfrak{K}(x)[y]_{<u+v}&\to\mathfrak{K}(x),\qquad\quad  1\leq i\leq n_a.\\
            h&\mapsto h(ax+b_i)
        \end{align*}
        For $M(a)(A,B)=A(y)P'(x,y,a)+B(y)\mathcal{D}P'(x,y,a)$, it's clear that $\omega_i(M(a)(A,B))=0$.
        Represent $\omega_i$ into row vector $(1, ax+b_i,\dots, (ax+b_i)^{u+v-1})$.
        The first $n_a$ coordinates of $\{\omega_i\}_{i=1}^{n_a}$ form a Vandermonde matrix, whose determinant equal to $\prod_{1\leq i<j\leq n_a}(b_j-b_i)\neq 0$,
        which means $\{\omega_i\}_{i=1}^{n_a}$ are linearly independent.
        Expend this family into a basis $\{e_j\}_{j=1}^{u+v}$ of the dual of $\mathfrak{K}(x)^{u+v}$.
        Let 
        \[T_a=\begin{pmatrix}
            \omega_1\\
            \vdots\\
            \omega_{n_a}\\
            e_{n_a+1}\\
            \vdots\\
            e_{u+v}
        \end{pmatrix}.\]
        For $1\leq i\leq n_a$, $(T_aM(a))_{ij}=\omega_i(M(a)e_j^T)=0$.
        Hence the first $n_a$ rows of $T_aM(a)$ are all zero:
        \[T_aM(a)=\begin{pmatrix}
            0&\cdots&0\\
            \vdots&&\vdots\\
            0&\cdots&0\\
            &*&\\
            &\vdots&\\
            &*&
        \end{pmatrix}.\]
        Applying $T_a$ to $M(z)$, we know that for $1\leq i\leq n_a$, $(T_aM(z))_{ij}$ has root $a$.
        Then 
        \[T_aM(z)=\begin{pmatrix}
            (z-a)I_{n_a}&0\\
            0&I_{u+v-n_a}
        \end{pmatrix}\widetilde{M_a}(z).\]
        Thus $(z-a)^{n_a}|\det(T_aM(z))=\det (T_a) \det M(z)$, which gives $(z-a)^{n_a}|\det M(z)$.

        \medskip

        In summary, $|\mathcal{S}(P')|=\sum_{a\in\mathfrak{K}}n_a\leq \deg_z\det M(z)\lesssim D_1D_2$.

    \end{proof}
    \section{Proof of Theorem \ref{thm:szemereditrotter}}\label{sec:incidence}
    In this section, we prove Theorem \ref{thm:szemereditrotter} and Theorem \ref{thm:rrichF}.
    We prove an estimate of $r$-rich points on algebraic variety first.
    \begin{lemma}\label{lem:algebraic}
        For algebraically closed field $\mathfrak{K}$ and $Q\in\mathfrak{K}[a,b]$ which is non-zero with degree $\leq D$.
        Assume $\mathcal{L}$ be collection of distinct lines in $\mathfrak{K}^2$.
        If finite set $S\subset Z(Q)$ such that each point in $S$ intersect with $r\geq 2$ lines in $\mathcal{L}$.
        Then $|S|\lesssim\frac{D|\mathcal{L}|}{r}$.
    \end{lemma}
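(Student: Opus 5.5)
The plan is to split $\mathcal{L}$ according to whether a line is contained in $Z(Q)$, and to count incidences between $S$ and each class separately.

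Write $Q=\prod_i Q_i$ as a product of irreducible factors. Let $\mathcal{L}_1$ be the lines of $\mathcal{L}$ lying in $Z(Q)$. Each such line is the zero set of some linear factor of $Q$, and distinct lines give distinct factors, so $|\mathcal{L}_1|\le D$. Let $\mathcal{L}_2:=\mathcal{L}\setminus\mathcal{L}_1$. For $\ell\in\mathcal{L}_2$, the restriction of $Q$ to $\ell$ is a nonzero univariate polynomial of degree $\le D$. Hence $|\ell\cap Z(Q)|\le D$, and therefore $\mathcal{I}(S,\mathcal{L}_2)\le D|\mathcal{L}_2|$.

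Next, split $S$ into two parts. Let $S_2$ be the points lying on at least $r/2$ lines of $\mathcal{L}_2$, and $S_1$ the rest, so each point of $S_1$ lies on at least $r/2$ lines of $\mathcal{L}_1$.

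For $S_2$, double counting gives
\[
\frac{r}{2}\,|S_2|\le \mathcal{I}(S_2,\mathcal{L}_2)\le D|\mathcal{L}|,
\]
so $|S_2|\lesssim D|\mathcal{L}|/r$.

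For $S_1$, every point lies on at least $r/2\ge 1$ lines of $\mathcal{L}_1$. The key step is to handle two regimes.

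If $r\ge 4$, each point of $S_1$ lies on at least two lines of $\mathcal{L}_1$, hence is an intersection point of two distinct lines of $\mathcal{L}_1$. This gives $|S_1|\le\binom{|\mathcal{L}_1|}{2}$. A sharper bound comes from double counting:
\[
\frac{r}{2}\,|S_1|\le \mathcal{I}(S_1,\mathcal{L}_1)=\sum_{\ell\in\mathcal{L}_1}|\ell\cap S_1|.
\]
Each point of $\ell\cap S_1$ is the intersection of $\ell$ with another line of $\mathcal{L}_1$, so $|\ell\cap S_1|\le |\mathcal{L}_1|-1<D$. Hence $\mathcal{I}(S_1,\mathcal{L}_1)\le D|\mathcal{L}_1|\le D|\mathcal{L}|$, and $|S_1|\lesssim D|\mathcal{L}|/r$.

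If $2\le r<4$, I would avoid the split altogether and bound $|S|$ directly. Every point of $S$ lies on at least two lines of $\mathcal{L}$. Each point is then either on a line of $\mathcal{L}_2$, or it is the intersection of two lines of $\mathcal{L}_1$.
\begin{itemize}
\item Points on a line of $\mathcal{L}_2$ number at most $\mathcal{I}(S,\mathcal{L}_2)\le D|\mathcal{L}|$.
\item Points that are intersections of two lines of $\mathcal{L}_1$ number at most $\binom{|\mathcal{L}_1|}{2}\le D|\mathcal{L}_1|\le D|\mathcal{L}|$.
\end{itemize}
Since $r\sim 1$ in this regime, this gives $|S|\lesssim D|\mathcal{L}|/r$.

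The same one-line argument in fact works uniformly: any $\ell\in\mathcal{L}_1$ meets $S\cap \ell$ only at intersections with other lines or on $\mathcal{L}_2$-lines. Combining the estimates for $S_1$ and $S_2$ proves the lemma.

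The only delicate point I anticipate is the observation $|\ell\cap S_1|\le|\mathcal{L}_1|-1$, which uses $r/2>1$. This is why the small-$r$ regime has to be handled separately; everything else is direct double counting.
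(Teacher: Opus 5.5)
Your proof is correct and is essentially the paper's argument: you split $\mathcal{L}$ into the at most $D$ lines contained in $Z(Q)$ and the remaining lines, each of which meets $Z(Q)$ in at most $D$ points, and compare the resulting incidence bound with $r|S|$. The paper avoids your splitting of $S$ and the case distinction in $r$ by noting that every $\ell\subset Z(Q)$ satisfies $|\ell\cap S|\le|\mathcal{L}|-1$, since each point of $S$ lies on a second line of $\mathcal{L}$ when $r\ge 2$; this gives $r|S|\le D|\mathcal{L}|+D(|\mathcal{L}|-1)$ in one step, which is the uniform one-line argument you mention at the end.
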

    \begin{proof}
        Recall
        \[\mathcal{I}(S,\mathcal{L}):=\sum_{s\in S}|\{\ell\in\mathcal{L}:s\in\ell\}|=\sum_{\ell\in\mathcal{L}}|\{s\in S:s\in \ell\}|.\]
        It's clear that $\mathcal{I}(S,\mathcal{L})\geq r|S|$.
        On the other hand, by B\'ezout's theorem, if $\ell\nsubset Z(Q)$, then $|\ell\cap Z(Q)|\leq D$.
        Otherwise, there are at most $D$ lines contained in $Z(Q)$.
        And each lines contains at most $|\mathcal{L}|-1$ points in $S$.
        Thus $|\mathcal{I}(S,\mathcal{L})|\leq |\mathcal{L}|D+D(|\mathcal{L}|-1)\lesssim D|\mathcal{L}|$.
        Hence $|S|\lesssim\frac{D|\mathcal{L}|}{r}$.

    \end{proof}
    For $A\subset \mathbb{F}^2$, recall the definition of the collection of $r$-rich lines is
    \[\mathcal{L}_r(A):=\{\ell\subset \mathbb{F}^2:\ell\text{ is a line such that }|A\cap\ell|\geq r\}.\]

        
    \begin{proof}[Proof of Theorem \ref{thm:rrichF}]
        It suffices to consider $r\leq \min\{|A|,p\}$.

        \noindent {\bf{Step I.}} If $r\leq 2|A|^{1/2}$, then we will use Theorem \ref{thm:polynomialdec} to prove (\ref{eq:rrichF}).
        We construct polynomial $P$ in this step.

        Let $D_2=\lceil\frac{4|A|}{r^2}\rceil$, $D_1=D_2r-1<p$.
        We can find $P\in\mathbb{F}[x,y,z]$ with $\deg_{x,y}P\leq D_1$, $\deg_z P\leq D_2$ such that the coefficient of $t^j$ in $P(a_1+t,a_2+zt,z)$ equal to $0$ in $\mathbb{F}[z]$ for all $(a_1,a_2)\in A$ and $0\leq j<D_2$.

        This is because all polynomials $P\in\mathbb{F}[x,y,z]$ such that $\deg_{x,y}P\leq D_1$, $\deg_z P\leq D_2$ form a linear space with dimension $(D_2+1)\binom{D_1+2}{2}$.
        And the coefficient of $t^j$ in $P(a_1+t,a_2+zt,z)$ be zero as a polynomial in $\mathbb F[z]$ impose homogeneous linear equations:
        For a basis monomial, the coefficient of $t^j$ in $(a_1+t)^i(a_2+zt)^kz^l$ is
        \begin{equation}\label{eq:coefficient}
            \sum_{\substack{m+n=j\\0\leq m\leq i\\0\leq n\leq k}}\binom{i}{m}\binom{k}{n}a_1^{i-m}a_2^{k-n}z^{l+n}.
        \end{equation}
        Since $l\le D_2$ and $n\le j$, this is a polynomial in $z$ of degree at most $D_2+j$. (\ref{eq:coefficient}) equals to zero identically imposes at most $D_2+j+1$ homogeneous linear equations.
        So the total number of equations is at most 
        \[|A|\sum_{j=0}^{D_2-1}(D_2+j+1)=\frac{|A|D_2(3D_2+1)}{2}.\]
        The dimension of the polynomial space is strictly larger than this number.
        Hence there is a non-zero $P\in\mathbb{F}[x,y,z]$ satisfying all the imposed conditions and the required degree bounds.
        
        We consider non-vertical lines first.
        Assume $\ell: y=ax+b$ satisfies $|A\cap \ell|\geq r$.
        The polynomial $P(x,ax+b,a)$ has degree $\leq D_1$.
        For each $(a_1,aa_1+b)\in A\cap \ell$, since the coefficient of $t^j$ in $P(a_1+t,a(a_1+t)+b,a)$ is $0$ for $0\leq j<D_2$, $t^{D_2}|P(a_1+t,a(a_1+t)+b,a)$.
        Let $t=x-a_1$, we obtain $(x-a_1)^{D_2}|P(x,ax+b,a)$.
        And there are at least $r$ distinct $a_1$, which means $P(x,ax+b,a)$ is divisible by a polynomial of degree $D_2r>D_1$.
        Hence $P(x,ax+b,a)\equiv 0$, which implies $(a,b)\in \mathcal{S}(P)$.

        \medskip

        \noindent {\bf{Step II.}}

        Let $\mathfrak{K}=\bar{\mathbb{F}}=\cup_{m=1}^\infty\mathbb{F}_{p^m}$.
        Applying Theorem \ref{thm:polynomialdec} to $P$, there exists $Q\in\mathfrak{K}[a,b]$ such that $\deg Q\leq D_2$ and $|\mathcal{S}(P)\setminus Z(Q)|\lesssim D_1D_2$.
        In the dual space:
        for line set $\mathcal{L}_A:=\{\{(a,b)\in\mathfrak{K}^2:aa_1+b=a_2\}:(a_1,a_2)\in A\}$, by Lemma \ref{lem:algebraic}, the $r$-rich points on $Z(Q)$ $\lesssim \frac{D_2|A|}{r}$.
        Hence 
        \[|\{\ell\in\mathcal{L}_r(A):\ell {\text{ is non-vertical}}\}|\lesssim \frac{D_2|A|}{r}+D_1D_2\lesssim\frac{|A|^2}{r^3}.\]
        For vertical lines, since they are disjoint, there are at most $\frac{|A|}{r}$ of them to be $r$-rich.
        Hence $|\mathcal{L}_r(A)|\lesssim \frac{|A|^2}{r^3}+\frac{|A|}{r}$.

        \medskip

        \noindent{\bf{Step III.}} High rich part.

        If $r>2|A|^{1/2}$, we can prove (\ref{eq:rrichF}) by using C\'ordoba $L^2$ argument.
        For each $\ell\in\mathcal{L}_r(A)$, let $Y(\ell)$ be the union of $r$ points on $A\cap \ell$.
        Define $\mu_Y(x):=|\{\ell\in\mathcal{L}_r(A): x\in Y(\ell)\}|$.
        By double counting and the fact that distinct lines have at most one intersection point, 
        \[\sum_{x\in A}\mu_Y(x)=r|\mathcal{L}_r(A)|,\]
        \[\sum_{x\in A}\mu_Y(x)^2=\sum_{x\in A}\mu_Y(x)(\mu_Y(x)-1)+\sum_{x\in A}\mu_Y(x)\leq r|\mathcal{L}_r(A)|+|\mathcal{L}_r(A)|(|\mathcal{L}_r(A)|-1).\]
        By Cauchy-Schwarz, $r^2|\mathcal{L}_r(A)|^2\leq |A|(r|\mathcal{L}_r(A)|+|\mathcal{L}_r(A)|^2)$.
        Thus $|\mathcal{L}_r(A)|\lesssim\frac{|A|}{r}$.

    \end{proof}
    
    Now we can prove Theorem \ref{thm:szemereditrotter}.



    \begin{proof}[Proof of Theorem \ref{thm:szemereditrotter}]
        For $r\geq\max\{2,\frac{16|A|}{p}\}$, we claim that
        \begin{equation}\label{eq:partialincidence}
            \mathcal{I}(A,\mathcal{L})\lesssim r|\mathcal{L}|+\frac{|A|^2}{r^2}+|A|.
        \end{equation}
        In fact, $\sum_{\ell\in\mathcal{L}\setminus\mathcal{L}_r(A)}\mathcal{I}(A,\ell)\leq r|\mathcal{L}|$.
        If $r\leq |A|^{1/2}$, let 
        \[\mathcal{L}_j:=\{\ell\in\mathcal{L}:2^jr\leq |A\cap\ell|<\min\{2^{j+1}r,|A|^{1/2}\}\}.\]
        Then by Theorem \ref{thm:rrichF},
        \[\sum_{\ell\in\mathcal{L}_j}|A\cap \ell|\leq 2^{j+1}r|\mathcal{L}_{j}(A)|\lesssim \frac{|A|^2}{(2^jr)^2}+|A|\lesssim \frac{|A|^2}{(2^jr)^2}.\]
        Then the incidence in this part is 
        \[\sum_{j\geq 0}\mathcal{I}(A,\mathcal{L}_j)\lesssim \sum_{j\geq 0}\frac{|A|^2}{(2^jr)^2}\lesssim\frac{|A|^2}{r^2}.\]
        If $r\geq |A|^{1/2}$ and also for the remaining high rich part, by Theorem \ref{thm:rrichF} we have $|\mathcal{L}_r(A)|\lesssim \frac{|A|^2}{r^3}+\frac{|A|}{r}\lesssim |A|^{1/2}$.
        Let $\mu(x):=|\{\ell\in\mathcal{L}_r(A):x\in\ell\}|$, then 
        \[\mathcal{I}(A,\mathcal{L}_r(A))=\sum_{x\in A}\mu(x)\leq |A|+\sum_{x\in A:\mu(x)\geq 2}\binom{\mu(x)}{2}\leq |A|+\binom{|\mathcal{L}_r(A)|}{2}\lesssim |A|.\]
        In the third inequality, we use the facts that $\binom{\mu(x)}{2}$ counts the number of unordered pairs of lines in $\mathcal{L}_r(A)$ that pass through $x$ and distinct lines have at most one intersection point.
        So (\ref{eq:partialincidence}) holds.

        Now let $r=\max\{2,\frac{16|A|}{p},(|A|^2/|\mathcal{L}|)^{1/3}\}$, then 
        $r|\mathcal{L}|\lesssim |\mathcal{L}|+\frac{|A||\mathcal{L}|}{p}+|A|^{2/3}|\mathcal{L}|^{2/3}$ and $\frac{|A|^2}{r^2}\lesssim |A|^{2/3}|\mathcal{L}|^{2/3}$.
        Thus (\ref{eq:szemereditrotter}) holds.

    \end{proof}

    \begin{remark}
        We remark that by a similar process, we can also prove the Szemerédi-Trotter theorem for all characteristic zero field, e,g. $\mathbb{R}^2$, $\mathbb{C}^2$ and $\mathbb{Q}_p^2$.
    \end{remark}

    \section{Applications to Incidence Estimates}
    We prove Theorem \ref{thm:furstenberg} in this section first.
    \begin{proof}[Proof of Theorem \ref{thm:furstenberg}]
        If $r<\max\left\{2,\frac{16|E|}{p}\right\}$, then it's clear $|E|\lesssim pr$.
        Else, by Theorem \ref{thm:rrichF}, 
        \[|\mathcal{L}|\lesssim \frac{|E|^2}{r^3}+\frac{|E|}{r},\]
        which means 
        \[|E|\gtrsim r^{3/2}|\mathcal{L}|^{1/2}+r|\mathcal{L}|.\]
    \end{proof}

    And we prove a proposition that is useful in the restriction estimate in $\mathbb{F}^3$.

    \begin{proposition}\label{prop:rectangles}
        Assume $p\equiv 3\mod 4$.
        For $A\subset \mathbb{F}^2$, define
        \[R(A):=|\{(a,b,c,d)\in A^4: a+c=b+d, |a|^2+|c|^2=|b|^2+|d|^2\}|.\]
        Then 
        \begin{equation}\label{eq:rectangles}
            R(A)\lesssim |A|^2\log(|A|+1)+\frac{|A|^3}{p}.
        \end{equation}
    \end{proposition}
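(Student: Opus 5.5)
The plan is to convert the rectangle-count into a point--line (or point--"perpendicular bisector") incidence problem and then apply Theorem \ref{thm:szemereditrotter} via a dyadic decomposition. Here $|a|^2=a_1^2+a_2^2$.

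\emph{Reduction to a geometric count.} For $(a,b,c,d)$ with $a+c=b+d$ and $|a|^2+|c|^2=|b|^2+|d|^2$, set $m=a+c=b+d$. Substituting $c=m-a$ and $d=m-b$ turns the second equation into $|a|^2-a\cdot m=|b|^2-b\cdot m$, i.e.
\[
(a-b)\cdot(a+b-m)=0,\qquad\text{equivalently}\qquad (a-b)\cdot(a-d)=0 .
\]
So $a,b,c,d$ are the vertices of a (possibly degenerate) rectangle: $b-a$ and $d-a$ are orthogonal, and $c=b+d-a$. Since $p\equiv 3\mod 4$, $-1$ is not a square, so $x\cdot x=0$ forces $x=0$. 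Hence there are no isotropic directions, and for a nonzero vector $v$ its orthogonal complement is a line distinct from $\mathbb{F}v$.

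\emph{Counting.} The degenerate quadruples ($b=a$ or $d=a$) contribute $O(|A|^2)$. For the rest, fix $a\in A$. The pair $(b,d)$ lies on two mutually perpendicular lines through $a$, namely $\ell$ and $\ell^\perp$, with $b\in\ell$, $d\in\ell^\perp$, and $c=b+d-a\in A$.

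A cleaner way to organize this is by the pair of perpendicular directions $\{\theta,\theta^\perp\}$. For each direction $\theta$, partition $A$ into lines parallel to $\theta$. A rectangle with sides in directions $\theta,\theta^\perp$ corresponds to two $\theta$-lines $\ell_1,\ell_2$ and two $\theta^\perp$-lines $m_1,m_2$ such that all four intersection points lie in $A$.

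Alternatively, use the perpendicular-bisector formulation: $a,b$ and $d,c$ are related by $c-b=d-a$, and the condition says the segments $ac$ and $bd$ share a midpoint and have equal length. So
\[
R(A)=\sum_{m}\sum_{\rho}N(m,\rho)^2,
\]
where $N(m,\rho)=|\{(a,c)\in A^2:a+c=m,\ |a-c|^2=\rho\}|$. The pairs counted by $N(m,\rho)$ are the points $a\in A\cap(m-A)$ lying on the "circle" $|2a-m|^2=\rho$. This is less linear, so I prefer the line formulation.

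\emph{Line formulation via incidences.} Write $R(A)$ up to $O(|A|^2)$ as $\sum_{a\in A}\sum_{\ell\ni a}(|A\cap\ell|-1)\,(\cdot)$. That is not directly separable, so instead bound by Cauchy--Schwarz:
\[
R(A)\le \sum_{\theta}\sum_{\ell_1,\ell_2\parallel\theta}|\pi_{\theta^\perp}(A\cap\ell_1)\cap\pi_{\theta^\perp}(A\cap\ell_2)|^2
\;\lesssim\; \sum_{\ell}|A\cap\ell|^2\cdot(\text{rich-line count}).
\]
Concretely, I would aim for the estimate
\[
R(A)\lesssim \sum_{a\in A}\sum_{\ell\ni a}|A\cap\ell|\,|A\cap\ell^\perp_a| \le \sum_{a}\Big(\sum_{\ell\ni a}|A\cap\ell|^2\Big),
\]
using $xy\le (x^2+y^2)/2$ and the fact that $\ell\mapsto\ell^\perp_a$ is a bijection on lines through $a$. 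This gives $R(A)\lesssim \sum_\ell |A\cap\ell|^3$. Now split dyadically, $|A\cap\ell|\sim 2^j$. For $2^j\ge\max\{2,16|A|/p\}$, Theorem \ref{thm:rrichF} gives a contribution of $2^{3j}\big(|A|^2 2^{-3j}+|A|2^{-j}\big)$. Summing over $O(\log|A|)$ scales gives $|A|^2\log(|A|+1)+|A|\sum 2^{2j}$, and the last sum is at most $|A|\cdot|A|$ since the lines satisfy $2^j\le|A|$. Lines with $|A\cap\ell|\lesssim |A|/p$ contribute at most $(|A|/p)^2\sum_\ell|A\cap\ell|\le (|A|/p)^2\cdot|A|(p+1)\lesssim |A|^3/p$.

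\emph{Main obstacle.} The main obstacle is the first inequality, $R(A)\lesssim\sum_\ell|A\cap\ell|^3$. Given $a$, $b\in\ell$ and $d\in\ell^\perp$, the vertex $c$ is determined, so this step needs the count to be over pairs $(b,d)$ with $b\in A\cap\ell$ and $d\in A\cap\ell^\perp$. That pair count is exactly $\sum_{\ell\ni a}|A\cap\ell|\,|A\cap\ell^\perp|$, and it is here that the non-isotropy ($p\equiv3\mod4$) is used, since it guarantees $\ell\neq\ell^\perp$ and makes $\ell\mapsto\ell^\perp$ an involution on the lines through $a$.
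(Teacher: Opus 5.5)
There is a genuine gap: the intermediate bound $R(A)\lesssim\sum_\ell|A\cap\ell|^3$ is too weak to give the proposition, and the dyadic summation hides this through an arithmetic slip. Your reduction to rectangles, $(a-b)\cdot(a-d)=0$ with $c=b+d-a$, is correct.

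At scale $|A\cap\ell|\sim 2^j$, Theorem~\ref{thm:rrichF} gives a contribution $|A|^2+|A|2^{2j}$. Since $2^j$ ranges up to $\min\{|A|,p\}$, the second term sums to order $|A|\min\{|A|,p\}^2$, which is $|A|^3$ when $|A|\le p$. It is not $|A|\cdot|A|$; that would require $2^j\lesssim|A|^{1/2}$.

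No sharper rich-line estimate can repair this, because the cubic quantity itself is too large. Take $A$ to be $N\le p$ collinear points. Then $\sum_\ell|A\cap\ell|^3\ge N^3$. On the other hand, on a non-isotropic line the conditions $a+c=b+d$ and $|a|^2+|c|^2=|b|^2+|d|^2$ force $\{a,c\}=\{b,d\}$, so $R(A)=2N^2-N$, while the target bound is $N^2\log N+N^3/p$. The AM--GM step $xy\le(x^2+y^2)/2$ is exactly where the needed information is lost. The first inequality, which you flagged as the main obstacle, is essentially harmless.

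The missing idea, which the paper uses, is a second bound for very rich lines that is linear in the richness. Charge each nondegenerate rectangle to an edge line $\ell$ with maximal $|A\cap\ell|$, say through $a$ and $b$. By maximality, $|A\cap\ell_a^\perp|\le|A\cap\ell|$, which gives your cubic count $O(|A\cap\ell|^3)$ per $\ell$.

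There is also a better count. Since $p\equiv 3\bmod 4$, we have $\ell^\perp\neq\ell$, so the perpendiculars $\ell_a^\perp$ for $a\in A\cap\ell$ are pairwise distinct parallel lines. Hence $\sum_{a\in A\cap\ell}|A\cap\ell_a^\perp|\le|A|$. With at most $|A\cap\ell|$ choices of $b$, this gives $O(|A|\,|A\cap\ell|)$ rectangles per $\ell$. This, rather than the involution $\ell\mapsto\ell^\perp$, is where non-isotropy is really used; compare the paper's remark for $p\equiv1\bmod4$.

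Use the cubic bound when $2^jr\le|A|^{1/2}$ and the linear bound when $2^jr>|A|^{1/2}$. In both regimes Theorem~\ref{thm:rrichF} gives $O(|A|^2)$ per dyadic scale:
$(2^jr)^3\bigl(|A|^2(2^jr)^{-3}+|A|(2^jr)^{-1}\bigr)\lesssim|A|^2$ and $|A|2^jr\bigl(|A|^2(2^jr)^{-3}+|A|(2^jr)^{-1}\bigr)\lesssim|A|^2$.
Summing over $O(\log(|A|+1))$ scales gives $|A|^2\log(|A|+1)$.

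A smaller point concerns lines below the threshold. When $|A|<p/8$ the threshold in Theorem~\ref{thm:rrichF} is $2$. One-point lines are then not covered by your estimate ``$|A\cap\ell|\lesssim|A|/p$'', and they contribute about $|A|p$ to $\sum_\ell|A\cap\ell|^3$. Weight by $|A\cap\ell|-1$ instead, since only lines with at least two points of $A$ carry nondegenerate rectangles. Then lines below $r=\max\{2,\lceil 16|A|/p\rceil\}$ contribute at most $r\sum_\ell|A\cap\ell|(|A\cap\ell|-1)=r|A|(|A|-1)\lesssim|A|^2+|A|^3/p$.
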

    \begin{proof}
        Since $p\equiv 3\mod 4$, the number of degenerate rectangles $2|A|^2-|A|\lesssim|A|^2$.
        Now we only consider the non-degenerate rectangles.
        Let $r:=\max\{2,\lceil\frac{16|A|}{p}\rceil\}$.

        Firstly, we count rectangles for which every edge lines fewer than $r$ points in $A$.
        The number of choices of adjacent vertices $a,b$ is $|A|^2$.
        The vertex $d$ is on the perpendicular line through $a$, which has fewer than $r$ points in $A$.
        Thus the number of these rectangles $\lesssim r|A|^2$. When $r=\lceil\frac{16|A|}{p}\rceil$, they contribute $\sim\frac{|A|^3}{p}$ rectangles.

        For the remaining rectangles, choose one of its edge lines with maximal intersection with $A$.
        Group these lines according to $2^jr\leq |A\cap \ell|<2^{j+1}r$ for $j=0,1,\cdots,\mathcal{O}(\log(|A|+1))$.
        By pigeonholing, it suffices to consider one class of $\ell$ with fixed $j$.
        Without loss of generality, we only consider the order that these $\ell$ pass through adjacent $a,b$.
        Then the number of choices of $a,b$ $\leq (2^{j+1}r)^2$.
        And the number of choices of $d$ $\leq 2^{j+1}r$.
        Hence each $\ell$ contributes $\mathcal{O}((2^{j}r)^3)$ rectangles.

        On the other hand, for each $a\in A\cap \ell$, let $\ell_a^\perp$ be the perpendicular line through $a$.
        Since $p\equiv 3\mod 4$, $\{\ell_a^\perp\}_{a\in A\cap\ell}$ are distinct parallel lines.
        Then $\sum_{a\in A\cap \ell}|A\cap\ell_a^\perp|\leq |A|$.
        For each $a,d$, there are $\leq 2^{j+1}r$ choices of $b\in\ell$. 
        Hence each $\ell$ contributes $\mathcal{O}(|A|2^{j}r)$ rectangles.

        Since $\ell\in\mathcal{L}_{2^{j}r}(A)$, if $2^j r\leq |A|^{1/2}$, using the first bound $\mathcal{O}((2^j r)^3)$ and Theorem \ref{thm:rrichF},
        \[R(A)\lesssim \log(|A|+1)(2^jr)^3|\mathcal{L}_{2^jr}(A)|\lesssim \log(|A|+1)(|A|^2+|A|2^{2j}r^2)\lesssim|A|^2\log(|A|+1).\]
        If $2^jr>|A|^{1/2}$, using the second bound $\mathcal{O}(|A|2^jr)$ and Theorem \ref{thm:rrichF},
        \[R(A)\lesssim \log(|A|+1)|A|2^{j}r|\mathcal{L}_{2^jr}(A)|\lesssim |A|^2\log(|A|+1).\]
    \end{proof}
    \begin{remark}
        We remark that if $p\equiv 1\mod 4$, then there exists $\mi\in\mathbb{F}$ such that $\mi^2=-1$.
        Select $A=\{(a,a\mi):a\in\mathbb{F}\}$, then $|(a,a\mi)|^2=a^2+(a\mi)^2=0$.
        For any $a,b,c\in A$, let $d=a+c-b$.
        The both equations in $R(A)$ are satisfied, thus $R(A)=p^3$.
        So (\ref{eq:rectangles}) fails.

        And when $p\equiv 3\mod 4$, (\ref{eq:rectangles}) is sharp up to constant.
        Let $A=\mathbb{F}^2$, then $R(A)\sim p^5$.
        On the other hand, for any $4\leq N\leq \sqrt{p}$, let $A=\{0,1,\dots,N-1\}^2$.
        For each $q<N-1$, we can write $q=r+s$ for $\varphi(q)$ pairs coprime integers $r,s\geq 1$.
        Here $\varphi(q)$ is Euler totient function.
        Then 
        \[x,\;x+k(r,s),\; x+l(-s,r),\; x+k(r,s)+l(-s,r)\]
        will form a rectangle in $A$ for any $x\in A$ and integers $1\leq k,l\lesssim \frac{N}{q}$.
        Hence 
        \[R(A)\gtrsim N^2\sum_{q<N-1}\varphi(q)\frac{N^2}{q^2}= N^4\sum_{q<N-1}\frac{\varphi(q)}{q^2}\sim N^4\log N.\]
        Here we use a classical approximation \cite[Chapter 3, Exercise 6]{zbMATH03523640}:
        \[\sum_{q<N}\frac{\varphi(q)}{q^2}=\frac{\pi^2}{6} \log N+\mathcal{O}(1).\]
        
    \end{remark}

    \section{Applications to Sum-Product estimates}
    In this section, we prove Theorem \ref{thm:sumproduct} and Theorem \ref{thm:diffproduct}.

    \medskip

    \noindent{\bf{Theorem \ref{thm:sumproduct}.}}
        For $A\subset \mathbb{F}$, we have 
        \[\max\{|A+A|,|A\cdot A|\}\gtrsim\min\{(p|A|)^{1/2},|A|^{5/4}\}.\]
        In particluar, if $|A|\leq p^{2/3}$, then 
        \[\max\{|A+A|,|A\cdot A|\}\gtrsim|A|^{5/4}.\]

    \medskip
    \begin{proof}
        Consider $P=(A+A)\times (A\cdot A)$ and $|A|^2$ lines $\mathcal{L}=\{\ell_{a,b}:y=a(x-b):a,b\in A\}.$
        For every $a,b,c\in A$, $(b+c,ac)\in P\cap \ell_{a,b}$.
        Hence $\mathcal{I}(P,\mathcal{L})\geq |A|^3$.

        On the other hand, Theorem \ref{thm:szemereditrotter} impies 
        \[|A|^3\leq\mathcal{I}(P,\mathcal{L})\lesssim \frac{|P||A|^2}{p}+|P|^{2/3}|A|^{4/3}+|P|+|A|^2.\]
        Hence $|P|=|A+A||A\cdot A|\gtrsim\min\{p|A|,|A|^{5/2}\}$.
        In particluar, 
        \[\max\{|A+A|,|A\cdot A|\}\gtrsim\min\{(p|A|)^{1/2},|A|^{5/4}\}.\]
    \end{proof}

    \noindent{\bf{Theorem \ref{thm:diffproduct}}}
        For $A\subset \mathbb{F}$ satisfies $|A|\geq 2$, there exists a subset $B\subset A^2$ with $|B|\geq |A|^2/2$ such that for any $(a,b)\in B$, 
        \[|(A-a)\cdot(A-b)|\gtrsim \min\left\{\frac{|A|^2}{\log|A|},p\right\}.\]
        In particlur, 
        \[|(A-A)\cdot(A-A)|\gtrsim \min\left\{\frac{|A|^2}{\log|A|},p\right\}.\]
    
    \medskip

    \begin{proof}
        {\bf{Step I.}}
        For $A\subset \mathbb{F}$, let $P=A\times A$.
        Define 
        \[T(A):=|\{(z_1,z_2,z_3)\in P^3: {\text{ there exists line }}\ell \text{ such that }z_1,z_2,z_3\in\ell\}|.\]
        We will prove 
        \[T(A)\lesssim |A|^4\log|A|+\frac{|A|^6}{p}.\]

        For each line $\ell$ in $\mathbb{F}^2$, let $Y(\ell)=P\cap\ell$.
        By the fact that two distinct points determine only one line, we have $\sum_{\ell}|Y(\ell)|(|Y(\ell)-1|)=|P|(|P|-1)$.
        Hence 
        \[T(A)\lesssim |P|^2+\sum_{\ell}\binom{|Y(\ell)|}{3}.\]
        Let $r=\max\{2,\lceil \frac{16|P|}{p}\rceil\}$, then 
        \[\sum_{\ell:|Y(\ell)|\leq r}\binom{|Y(\ell)|}{3}\lesssim r|P|^2\lesssim|P|^2+\frac{|P|^3}{p}.\]
        For the remaining lines, group them according to $2^jr<|Y(\ell)|\leq 2^{j+1}r$, $ j=0,1,\dots,\mathcal{O}(\log|A|)$.
        For each $j$, by Theorem \ref{thm:rrichF}, 
        \[|\mathcal{L}_{2^jr}(P)|\lesssim \frac{|P|^2}{(2^jr)^3}+ \frac{|P|}{2^jr}.\]
        Then 
        \[\sum_{\ell:|Y(\ell)|>r}\binom{|Y(\ell)|}{3}\lesssim \sum_{j=0}^{\mathcal{O}(\log|A|)}(2^jr)^3|\mathcal{L}_{2^jr}|\lesssim |P|^2\log|A|.\]
        In summary, 
        \[T(A)\lesssim |P|^2+\sum_{\ell}\binom{|Y(\ell)|}{3}\lesssim |P|^2\log|A|+\frac{|P|^3}{p}=|A|^4\log|A|+\frac{|A|^6}{p}.\]

        \medskip

        {\bf{Step II.}}
        For any $U,V\subset \mathbb{F}$, define the multiplicative energy 
        \[E(U,V):=|\{(u_1,u_2,v_1,v_2)\in U^2\times V^2:u_1v_1=u_2v_2\}|.\]
        Then 
        \begin{equation}\label{eq:collinear}
            \sum_{a,b\in A}E(A-a,A-b)\leq T(A).
        \end{equation}
        In fact, the L.H.S. of (\ref{eq:collinear}) counts the number of pairs $(a,b,x_1,x_2,y_1,y_2)\in A^6$ such that 
        \[(x_1-a)(y_1-b)=(x_2-a)(y_2-b).\]
        In other words, $(a,b), (x_1,y_2), (x_2,y_1)$ are on the same non-vertical line.
        Thus 
        \[\sum_{a,b\in A}E(A-a,A-b)\leq T(A)\lesssim |A|^4\log|A|+\frac{|A|^6}{p}.\]

        Define the average
        \[M:=\frac{1}{|A|^2}\sum_{a,b\in A}E(A-a,A-b)\lesssim |A|^2\log|A|+\frac{|A|^4}{p}.\]
        Then we can choose  
        \[B:=\{(a,b)\in A^2: E(A-a,A-b)\leq 2M\}\]
        such that $|B|\geq |A|^2/2$.
        For each $(a,b)\in B$, by Cauchy-Schwarz 
        \[|A|^4\leq |(A-a)\cdot(A-b)|\,E(A-a,E-b)\lesssim |(A-a)\cdot(A-b)|\left(|A|^2\log|A|+\frac{|A|^4}{p}\right). \]
        So
        \[|(A-a)\cdot(A-b)|\gtrsim \min\left\{\frac{|A|^2}{\log|A|},p\right\}.\]
        Note that $(A-a)\cdot(A-b)\subset (A-A)\cdot(A-A)$, we obtain 
        \[|(A-A)\cdot(A-A)|\gtrsim \min\left\{\frac{|A|^2}{\log|A|},p\right\}.\]
    \end{proof}

    \section{Restriction Estimate in \texorpdfstring{$\mathbb{F}^3$}{F3}}\label{sec:fourier}
    In this section, we prove Theorem \ref{thm:restriction}.
    Firstly, we introduce some basic notation about the fourier analysis in finite field, which could be found in \cite{zbMATH02103577}.

    For function $f:\mathbb{F}^n\to \mathbb{C}$, define the integral under counting measure 
    \[\int_{\mathbb{F}^n}f(x)\opd x=\sum_{x\in\mathbb{F}^n}f(x).\]
    Let $e:\mathbb{F}^n\to\mathbb{C}^\times$ be a non-trivial character with respect to $(\mathbb{F}^n,+)$.
    Then all of these additive characters are unitary.
    Let $\hat{\mathbb{F}}^n$ be the collection of all these characters, equipped with weak topology.
    For a fixed character $e$, we choose $e(a):=\me^{2\pi \mi a/p}$.
    All elements in $\hat{\mathbb{F}}^n$ can be given by the isomorphism:
    \begin{align*} 
        \mathbb{F}^n&\to \hat{\mathbb{F}}^n\\ \xi&\mapsto e_{\xi}:=e(\left<\cdot,\xi\right>)
    \end{align*}
    Hence we can write the Fourier transform of function $f$
    \[\hat{f}(\xi)=\int_{\mathbb{F}^n}f(x)e(-x\cdot\xi)\]
    to be function on the dual space $\mathbb{F}_*^n$.
    For function $g:\mathbb{F}_*^n\to\mathbb{C}$, define the integral under normalized counting measure:
    \[\int_{\mathbb{F}_*^n}g(\xi)\opd \xi=p^{-n}\sum_{\xi\in\mathbb{F}_*^n}g(\xi).\]
    
    Consider paraboloid $\mathbb{P}^{2}:=\{(\xi,|\xi|^2):\xi\in\mathbb{F}_*^{2}\}$ in $\mathbb{F}_*^3$, where $|\xi|^2:=\xi\cdot\xi$.
    To make the paraboloid anisotropic, we assume $p\equiv 3 \mod 4$.
    Thus $-1$ is not square.
    Define the Fourier extension operator 
    \[(f\opd\sigma)^\vee(x):=\frac{1}{p^{2}}\sum_{\xi\in\mathbb{P}^{2}}f(\xi)e(x\cdot\xi).\]
    The inverse Fourier transform of the surface measure can be written as:
    \[
    (\opd \sigma)^\vee(x)=
    \begin{cases}
        \frac{1}{p^{2}}e(-\frac{|\bar{x}|^2}{4x_3})\left(\sum_{\xi\in\mathbb{F}_*}e(x_3\xi^2)\right)^{2},\quad x_3\neq 0\\
        \delta_0(\bar{x}),\quad x_3=0
    \end{cases}.\]
    Define the Bochner-Riesz kernel $K(x):=(\opd\sigma)^\vee(x)-\delta_0(x)$.
    Thus $|K(x)|=p^{-1},\;x_3\neq 0$ and $K(x)=0,\; x_3=0$.
    By Plancherel,
    \begin{equation}\label{eq:plancherel}
        \|\hat{f}\|_{L^2(\mathbb{P}^2,\opd\sigma)}^2=\left<f,f*(\opd\sigma)^\vee\right>=\|f\|_{L^2}^2+\left<f,f*K\right>.
    \end{equation}
    Mockenhaupt-Tao have proved the following proposition:
    \begin{proposition}
        Assume $G\subset \mathbb{F}^3$ and $|f|\leq \chi_G$.
        For each $z\in\mathbb{F}$, define the slice 
        \[G_z:=\{\bar{x}\in\mathbb{F}^2:(\bar{x},z)\in G\}\] 
        and 
        \[f_z(x):=\begin{cases} f(\bar{x},z),\quad x_3=z\\ 0,x_3\neq z\end{cases}.\]
        Then 
        \[\|f_z*K\|_{L^4}^4\leq p^{-1}R(G_z).\]
        Here 
        \[R(A):=|\{(a,b,c,d)\in A^4: a+c=b+d, |a|^2+|c|^2=|b|^2+|d|^2\}|\] is the number of ordered rectangles in $A$.
    \end{proposition}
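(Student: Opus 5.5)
The plan is to expand the $L^4$ norm directly and use the explicit form of $K$ on the slice. Since $f_z$ is supported on $\{x_3=z\}$ and $K(y)=0$ when $y_3=0$, we have $f_z*K(x)=0$ for $x_3=z$. For $x_3\neq z$, the formula for $(\opd\sigma)^\vee$ gives
\[
K(y)=p^{-2}\, g(y_3)^2\, e\!\left(-\frac{|\bar y|^2}{4y_3}\right),\qquad g(t):=\sum_{\xi\in\mathbb{F}}e(t\xi^2).
\]
Here $|g(t)|^2=p$, so $g(y_3)^2=p\,\epsilon(y_3)$ with $|\epsilon|=1$. Writing $s:=x_3-z\neq 0$, this yields
\[
f_z*K(\bar x,x_3)=p^{-1}\epsilon(s)\sum_{\bar y\in G_z} f(\bar y,z)\, e\!\left(-\frac{|\bar x-\bar y|^2}{4s}\right).
\]
Expanding $|\bar x-\bar y|^2=|\bar x|^2-2\bar x\cdot\bar y+|\bar y|^2$, the factor $e(-|\bar x|^2/4s)$ is unimodular and can be dropped. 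What remains is $p^{-1}$ times a function $h_s(\bar x)=\sum_{\bar y}g_s(\bar y)e(\bar x\cdot\bar y/2s)$, where $g_s(\bar y)=f(\bar y,z)e(-|\bar y|^2/4s)$ satisfies $|g_s|\le\chi_{G_z}$.

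Next, compute $\sum_{\bar x\in\mathbb{F}^2}|h_s(\bar x)|^4$ by expanding into four sums over $\bar a,\bar b,\bar c,\bar d\in G_z$. Summing over $\bar x$ gives the orthogonality factor $p^2\,\mathbf{1}[\bar a+\bar c=\bar b+\bar d]$, since $2s$ is invertible and the map $\bar x\mapsto\bar x/2s$ is a bijection. The phases $e(-(|\bar a|^2+|\bar c|^2-|\bar b|^2-|\bar d|^2)/4s)$ do not cancel individually, so the key step is to keep the $s$-dependence and sum over $s\in\mathbb{F}^\times$ before taking absolute values. Summing over $s$ means summing $e(-\Delta\cdot t)$ with $t=1/4s$ ranging over $\mathbb{F}^\times$, where $\Delta:=|\bar a|^2+|\bar c|^2-|\bar b|^2-|\bar d|^2$. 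This sum equals $p-1$ if $\Delta=0$ and $-1$ otherwise. The $f$-values enter as $f(\bar a)\overline{f(\bar b)}f(\bar c)\overline{f(\bar d)}$, independent of $s$.

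Putting these together,
\[
\|f_z*K\|_4^4=p^{-4}\,p^2\sum_{\bar a+\bar c=\bar b+\bar d}F\cdot\bigl(p\,\mathbf{1}[\Delta=0]-1\bigr),
\]
where $F$ denotes the product of $f$-values. The $\Delta=0$ part is at most $p^{-1}R(G_z)$. The leftover term $-p^{-2}\sum_{\bar a+\bar c=\bar b+\bar d}F$ equals $-p^{-2}\cdot p^{-2}\sum_{\bar x}|\hat{f}(\bar x)|^4$, up to an identification of the Fourier transform, which is $\le 0$. Hence the bound $p^{-1}R(G_z)$ holds exactly.

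The main obstacle is the sign bookkeeping and normalizations: checking that the Gauss-sum factor $g(s)^2$ indeed has modulus $p$ and depends only on $s$, so it factors out of the $\bar x$-sum in modulus; that the character-sum identity over $s$ is applied before any absolute values; and that the constant-$(-1)$ term produces a nonpositive quantity, which follows because it is a nonnegative fourth moment with a minus sign. The case $x_3=z$ contributes nothing, as noted at the start.
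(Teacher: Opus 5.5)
Your proof is correct. The paper gives no proof of this proposition: it attributes it to Mockenhaupt--Tao and uses it as a black box in the proof of Theorem~\ref{thm:restriction}. Your computation is therefore a self-contained justification, not a variant of an argument in the text.

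The steps all check out:
\begin{itemize}
\item $K(\cdot,0)\equiv 0$ kills the slice $x_3=z$.
\item For $s=x_3-z\neq 0$, the Gauss-sum formula gives $|f_z*K(\bar x,z+s)|=p^{-1}|h_s(\bar x)|$.
\item The $\bar x$-sum forces $\bar a+\bar c=\bar b+\bar d$.
\item Since $s\mapsto 1/(4s)$ permutes $\mathbb{F}^\times$, the $s$-sum yields exactly $p\,\mathbf{1}[\Delta=0]-1$.
\end{itemize}
With $\phi:=f(\cdot,z)$ and $\widehat{\phi}(\bar x)=\sum_{\bar y}\phi(\bar y)e(-\bar x\cdot\bar y)$, this is the exact identity
\[
\|f_z*K\|_{L^4}^4=p^{-1}\sum_{\substack{\bar a+\bar c=\bar b+\bar d\\ \Delta=0}}F\;-\;p^{-4}\sum_{\bar x\in\mathbb{F}^2}|\widehat{\phi}(\bar x)|^4 .
\]
The stated bound with constant $1$ follows from it.

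The sign observation only affects the constant. Suppose you had instead bounded the $-1$ term in absolute value. You would pick up an extra $p^{-2}E_+(G_z)$, where $E_+$ is the additive energy. But for each fixed value of $\bar a+\bar c$, the quantity $|\bar a|^2+|\bar c|^2$ takes at most $p$ values. So Cauchy--Schwarz gives $E_+(G_z)\le p\,R(G_z)$, and you would still get $\|f_z*K\|_{L^4}^4\le 2p^{-1}R(G_z)$. That is all the application needs, so the exact constant $1$ is the only gain.

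Nothing in your computation uses $p\equiv 3\bmod 4$. That hypothesis enters only through the bound on $R(G_z)$ in Proposition~\ref{prop:rectangles}.
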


    Now we prove Theorem \ref{thm:restriction}.

    \medskip

    \noindent{\bf{Theorem \ref{thm:restriction}.}}
    If $p\equiv 3\mod 4$, then $R^*(2\to\alpha)$ holds for $\alpha>\frac{10}{3}$.

    \smallskip

    \begin{proof}
        We prove the dual of $R^*(2\to \alpha)$ for $\alpha>\frac{10}{3}$.
        For $1< s<\frac{10}{7}$, without loss of generality, we can assume $\|f\|_{L^s}=1$.
        Let
        \[E_j:=\{x:2^{-j-1}<|f(x)|\leq 2^{-j}\},\; f_j:=2^jf\chi_{E_j},\quad j\geq 0.\]
        Then $|E_j|\leq 2^{(j+1)s}$ and $|f_j|\leq \chi_{E_j}$.
        For each $f_j$, by (\ref{eq:plancherel}) we have
        \begin{align*} 
            \|\hat{f}_j\|_{L^2(\mathbb{P}^2,\opd\sigma)}^2&\leq |E_j|+\|f_j\|_{L^{4/3}}\|f_j*K\|_{L^4}\\
            &\leq |E_j|+|E_j|^{3/4}\sum_{z\in\mathbb{F}}\|f_{j,z}*K\|_{L^4}\\
            &\leq |E_j|+|E_j|^{3/4}p^{-1/4}\sum_{z\in\mathbb{F}}R(E_{j,z})^{1/4}.
        \end{align*}
        By Proposition \ref{prop:rectangles} and H\"older,
        \begin{align*} 
            \sum_{z\in\mathbb{F}}R(E_{j,z})^{1/4}&\lesssim\sum_{z\in\mathbb{F}}\left(|E_{j,z}|^{1/2}(\log(|E_{j}|+1))^{1/4}+\frac{|E_{j,z}|^{3/4}}{p^{1/4}}\right)\\
            &\leq (\log(|E_j|+1))^{1/4}p^{1/2}|E_j|^{1/2}+|E_j|^{3/4}.
        \end{align*}
        So 
        \begin{equation}\label{eq:bound1}
            \|\hat{f}_j\|_{L^2(\mathbb{P}^2,\opd \sigma)}\lesssim |E_j|^{1/2}+\log(|E_{j}|+1)^{1/8}p^{1/8}|E_j|^{5/8}+p^{-1/8}|E_j|^{3/4}.
        \end{equation}
        On the other hand, by (\ref{eq:plancherel}) we have 
        \begin{equation}\label{eq:bound2}
            \|\hat{f}_j\|_{L^2(\mathbb{P}^2,\opd \sigma)}\leq |E_j|^{1/2}+p^{-1/2}|E_j|.
        \end{equation}
        And by Parseval identity, 
        \begin{equation}\label{eq:bound3}
            \|\hat{f}_j\|_{L^2(\mathbb{P}^2,\opd \sigma)}^2=p^{-2}\sum_{\xi\in\mathbb{P}^2}|\hat{f}_j(\xi)|^2\leq p\|f_j\|_{L^2}^2\leq p|E_j|.
        \end{equation}
        Combining (\ref{eq:bound1}), (\ref{eq:bound2}) and (\ref{eq:bound3}), we obtain 
        \[\|\hat{f}_j\|_{L^2(\mathbb{P}^2,\opd \sigma)}\lesssim \log(|E_{j}|+1)^{1/8}|E_j|^{7/10}.\]
        Summing all $j$ up, we obtain
        \[\|\hat{f}\|_{L^2(\mathbb{P}^2,\opd\sigma)}\lesssim \sum_{j\geq 0}2^{-j}\log(|E_j|+1)^{1/8}|E_j|^{7/10}\lesssim \sum_{j\geq 0}j^{1/8}2^{-j(1-7s/10)}<\infty\]
        holds for $s<\frac{10}{7}$.
        By duality, $R^*(2\to \alpha)$ holds for $\alpha>\frac{10}{3}$.

    \end{proof}

    \section*{Acknowledgement}
    This project was supported by the National Key R\&D program of China: No.2022YFA1005700. C. Miao was supported by NSFC Grant 12371095 and 12531005.
    The second author have presented this paper at the Advanced Seminar on Harmonic Analysis at Peking University.
    The authors would like to thank the seminar participants, especially Yudong Liang and Xuanjun Luo, for comments to a previous draft of this manuscript.

    After completing this paper, we learned that Prof. Lewko had independently obtained similar results \cite{lewko2026szemereditrottertheoremarbitraryfields}.
    We are grateful to Prof. Lewko for sharing his manuscript and valuable suggestions.

    \bibliographystyle{alpha}
    \bibliography{bibli}
\end{document}